\numberwithin{equation}{section}
\newtheorem{theorem}{Theorem}[section]
\newaliascnt{lemma}{theorem}
\newtheorem{lemma}[lemma]{Lemma}
\newaliascnt{proposition}{theorem}
\newtheorem{proposition}[proposition]{Proposition}
\newaliascnt{corollary}{theorem}
\newtheorem{corollary}[corollary]{Corollary}
\theoremstyle{definition}
\newaliascnt{definition}{theorem}
\newtheorem{definition}[definition]{Definition}
\theoremstyle{remark}
\newaliascnt{remark}{theorem}
\newtheorem{remark}[remark]{Remark}
\newcommand{\N}{\mathbbm N}
\newcommand{\R}{\mathbbm R}
\titleformat{\section}{\filcenter\sc\large}{\thesection.\;}{0em}{}
\titleformat{\subsection}[runin]{\bf}{\thesubsection.\;}{0em}{}[.]
\def\XXint#1#2#3{{\setbox0=\hbox{$#1{#2#3}{\int}$}
     \vcenter{\hbox{$#2#3$}}\kern-.5\wd0}}
\newcommand{\e}{\mathrm{e}}
\DeclareMathOperator{\vol}{\mathrm{vol}}
\DeclareMathOperator{\esssup}{\mathrm{esssup}}
\title{Uniqueness of Weak Solutions for the Normalised Ricci Flow in Two Dimensions}
\author{Franziska Borer\thanks{ETH Zurich, Department of Mathematics, R\"amistrasse 101, 8092 Zurich, Switzerland\newline email: \href{mailto:franziska.borer@math.ethz.ch}{franziska.borer@math.ethz.ch}}}
\date{}
\begin{document}
\maketitle

\begin{abstract}
We show uniqueness of classical solutions of the normalised two-dimensional Hamilton-Ricci flow on closed, smooth manifolds for smooth data among solutions satisfying (essentially) only a uniform bound for the Liouville energy and a natural space-time $L^2$-bound for the time derivative of the solution. The result is surprising when compared with results for the harmonic map heat flow, where non-uniqueness through reverse bubbling may occur.
\end{abstract}

\section{Introduction}
We consider Hamilton's normalised Ricci flow on a two-dimensional, smooth, connected, closed Riemannian surface $(M,g_0)$.
Hamilton \cite{Ham88} showed the global existence and uniqueness of this flow for smooth initial metrics $g_0$.

Here, we want to investigate the uniqueness of Hamilton's solution also among suitable  weak solutions of the flow.
The energy identity for classical solutions of the normalised Ricci flow will help us  characterise the natural space of admissible weak solutions; see \autoref{adSol} below. Comparing with similar results for the harmonic map heat flow (see \cite{Rup08}), we surprisingly do not have to assume that the energy of the weak solution is (essentially) decreasing.

We will show that any admissible weak solution of the normalised Ricci flow for given smooth initial data coincides with the classical solution of Hamilton.
In fact, our results not only hold for smooth data but also for data of class $H^2$. This is by no means trivial as we can see in the case of the harmonic map heat flow. There, in a corresponding class of weak solutions, non-uniqueness through reverse bubbling can occur, as shown by Topping \cite{Top02HMF} and by Bertsch, Dal Passo and Van der Hout \cite{BerPassHou02}. In this case uniqueness only holds if upward jumps of the energy are a-priori restricted to size smaller than $4\pi$, as shown by Ruplin \cite{Rup08}, after a conjecture by Topping \cite{Top96_thesis}.

This paper is structured as follows: First we review some properties of classical solutions of the normalised Ricci flow. This will motivate our definition of an admissible weak solution.
Writing $g=\e^{2u}\bar g$ for our evolving metrics $g=g(t)$, the main part of the paper then consists in showing that the difference of a weak solution $v$ and the classical solution $u$ with the same initial data is of constant sign.
A measure theoretic argument together with conservation of volume then shows that the two solutions are equal almost everywhere, which gives our main result. In \cite{GieTop10}, Giesen and Topping used a similar argument to show that the unnormalised Ricci flow on surfaces has a unique, global solution for incomplete initial metrics $g_0$ with $K_{g_0}\le -\eta<0$ within the class of instantaneously complete Ricci flows.

\textbf{Acknowledgement:} I cordially thank Michael Struwe for his helpful mentoring. This work was supported by the Swiss National Science Foundation project ``Curvature and Criticality in Geometric Analysis'' (project number \href{http://p3.snf.ch/Project-140467}{140467}).

\section{Classical Solutions for the Normalised Ricci Flow in Two Dimensions}\label{CS}
Let $(M,g_0)$ be a smooth, two-dimensional, closed (that is compact without boundary), connected Riemannian manifold. The normalised Ricci flow (introduced by Hamilton \cite{Ham82} in 1982) deforms in two dimensions the metric $g_0$ under the evolution equation
\begin{equation}\label{normalisierterRF2dim}
\begin{aligned}
\partial_tg(t)&=(r_{g(t)}-2K_{g(t)})g(t),\quad t>0;\\
g(0)&=g_0,
\end{aligned}
\end{equation}
where $K_{g(t)}$ denotes the Gauss curvature of the Riemannian metric $g(t)$ and
\[r_{g(t)}=\frac{2}{\vol_{g(t)}}\int_MK_{g(t)}d\mu_{g(t)}.\]
Here, $\vol_{g(t)}=\int_Md\mu_{g(t)}$ denotes the volume of the manifold with respect to the metric $g(t)$.
The term $r_{g(t)}g(t)$ in \eqref{normalisierterRF2dim} ensures that this volume remains constant: Indeed, we have
\[ \frac d{dt}\vol_{g(t)} = \frac d{dt}\int_Md\mu_{g(t)} = \int_M(r_{g(t)}-2K_{g(t)})d\mu_{g(t)} = 0. \]
By the uniformisation theorem (see e.g.~\cite[Theorem 1.7, page 7]{Lee97}) there exists a metric $\bar g$, we call it background metric, which is conformal to $g_0$ and has constant curvature. This means that $g_0$ can be written as $g_0=\e^{2u_0}\bar g$ for a suitable function $u_0$, and $K_{\bar g}\equiv \bar K\in\R$, where $K_{\bar g}$ denotes the Gauss curvature of $\bar g$.
Considering equation \eqref{normalisierterRF2dim} we see that the change in the metric is pointwise a multiple of the metric. So, the conformal class for an initial metric $g_0=\e^{2u_0}\bar g$ is preserved.
Therefore we may express the solution by $g(t)=\e^{2u(t)}\bar g$ with $u(0)=u_0$. Using now that according to the Gauss--Bonnet theorem
\[r_{g(t)}=2\frac{\vol_{\bar g}}{\vol_{g(t)}}\bar K=2\frac{\vol_{\bar g}}{\vol_{g_0}}\bar K\]
(and observing that the scalar curvature $R_{g(t)}$ in two dimension is just twice the Gauss curvature $K_{g(t)}$), the equation \eqref{normalisierterRF2dim} reads
\begin{equation}\label{konformK}
\begin{aligned}
\partial_tu(t)&=\frac{\vol_{\bar g}}{\vol_{g_0}}\bar K-K_{g(t)}, \quad t>0;\\
u(0)&=u_0.
\end{aligned}
\end{equation}
Without loss of generality we may assume $\vol_{\bar g}=\vol_{g_0}$. So by using the Gauss equation
\begin{equation}\label{Gauss}
K_{g(t)} = \e^{-2u(t)}(\bar K-\Delta_{\bar g}u(t))
\end{equation}
to calculate the Gau{\ss} curvature $K_{g(t)}$ of the metric $g(t)$, the normalised Ricci flow equation~\eqref{normalisierterRF2dim} reduces to
\begin{equation}\label{konformRF}
\begin{aligned}
\partial_tu(t)&=\e^{-2u(t)}\Delta_{\bar g}u(t)+\bar K(1-\e^{-2u(t)}),\quad t>0;\\
u(0)&=u_0.
\end{aligned}
\end{equation}

Hamilton \cite{Ham88} and Chow \cite{Cho91} showed that for every smooth, compact surface $M$ without boundary and every smooth initial metric $g_0$ on $M$, there exists a unique, smooth, global solution $g(t)$ of \eqref{normalisierterRF2dim} which as $t\to\infty$ converges, exponentially fast, to a metric of constant curvature.

Let
\[U_T:=L^\infty((0,T); H^2(M,\bar g))\cap L^2((0,T);H^3(M,\bar g))\cap H^1((0,T);H^1(M,\bar g));\]
see the next section for a precise definition.
In \cite{Str02}, Struwe showed the existence of a unique, global solution $u$ of \eqref{konformRF} in the space $U_T$ for data $u_0\in H^2(M,\bar g)$, which is classical for $t>0$, and was able to give a simpler proof of exponentially fast convergence.

\begin{theorem}[Struwe, \cite{Str02}]\label{Struwe}
For any $u_0\in H^2(M,\bar g)$ there exists a unique, global solution $u\in U_T$ of \eqref{konformRF} which is smooth for $t>0$ and preserves volume.
\end{theorem}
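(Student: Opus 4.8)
The plan is to read \eqref{konformRF} as a quasilinear, uniformly parabolic equation $\partial_t u = \e^{-2u}\Delta_{\bar g}u + \bar K(1-\e^{-2u})$ and to combine short-time existence with global a priori estimates, followed by parabolic bootstrapping and a Gronwall uniqueness argument. Since $\dim M = 2$, the embedding $H^2(M,\bar g)\hookrightarrow C^0(M)$ makes $u_0$, and any candidate solution with values in $H^2$, continuous, so $\e^{-2u}$ is bounded above and below on a short time interval and the equation is uniformly parabolic to start with. For local existence I would freeze the coefficient, replacing $u$ inside $\e^{-2u}$ by a given $w$ from a small ball in $U_T$, solve the resulting linear parabolic problem by the standard $L^2$-theory (Galerkin approximation or analytic semigroups), and close a Banach fixed-point argument for $T$ small; the linear estimates place the fixed point in $U_T$.

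Two conservation-type identities drive the global theory. First, volume is preserved: from \eqref{konformK} and the Gau\ss--Bonnet theorem $\int_M K_g\,d\mu_g = \bar K\vol_{\bar g}$, one computes
\[\frac{d}{dt}\vol_g = 2\int_M \partial_t u\,d\mu_g = 2\bar K(\vol_g-\vol_{\bar g}),\]
an ordinary differential equation whose solution with $\vol_g(0)=\vol_{g_0}=\vol_{\bar g}$ is constant. Second, the Liouville energy $E(u)=\tfrac12\int_M|\nabla_{\bar g}u|^2\,d\mu_{\bar g}+\bar K\int_M u\,d\mu_{\bar g}$ is monotone: using \eqref{Gauss} to write $\bar K-\Delta_{\bar g}u = \e^{2u}K_g$, integrating by parts, and then inserting $\partial_t u = \bar K-K_g$ together with $\int_M\partial_t u\,d\mu_g=0$ (the content of volume conservation), one obtains
\[\frac{d}{dt}E(u) = \int_M(\bar K-\Delta_{\bar g}u)\,\partial_t u\,d\mu_{\bar g} = \int_M K_g\,\partial_t u\,d\mu_g = -\int_M(\partial_t u)^2\,d\mu_g\le 0.\]
Integrating gives the uniform bound $E(u(t))\le E(u_0)$ and the space--time estimate $\int_0^T\!\int_M(\partial_t u)^2\,d\mu_g\,dt\le E(u_0)$ --- exactly the Liouville-energy and $L^2$-in-time bounds singled out in the abstract.

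To upgrade these to a global solution in $U_T$, I would first extract two-sided $L^\infty$-bounds on $u$, keeping the equation uniformly parabolic for all time. The energy controls $\|\nabla_{\bar g}u\|_{L^2}$, which together with volume conservation and the Moser--Trudinger inequality controls $\int_M\e^{\pm 2u}$; a maximum-principle analysis of $u_{\max}(t)$ and $u_{\min}(t)$ --- at a spatial maximum $\partial_t u\le\bar K(1-\e^{-2u_{\max}})$, and symmetrically at a minimum --- then yields pointwise bounds. Once $\e^{-2u}$ is trapped between positive constants, the equation itself gives the algebraic identity $\Delta_{\bar g}u=\e^{2u}\partial_t u-\bar K(\e^{2u}-1)$, so that $\|u(t)\|_{H^2}$ is controlled by $\|\partial_t u(t)\|_{L^2}$ and $\|u(t)\|_{L^\infty}$; it therefore suffices to bound $\sup_t\|\partial_t u(t)\|_{L^2}$, which I would obtain by differentiating \eqref{konformRF} in time and running an $L^2$-energy estimate for $\partial_t u$. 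This closes a Gronwall inequality for $\|u(t)\|_{H^2}$, rules out finite-time blow-up, and --- after one further differentiation --- gives the $L^2((0,T);H^3)$ and $H^1((0,T);H^1)$ memberships; smoothness for $t>0$ then follows from the parabolic regularisation mechanism, De Giorgi--Nash--Moser H\"older estimates followed by Schauder theory and bootstrap.

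Uniqueness within $U_T$ I would prove by the energy method: for two solutions $u_1,u_2$ with the same data the difference $w=u_1-u_2$ solves a linear parabolic equation, and, testing with $w$ and using the uniform $L^\infty$- and $H^2$-bounds on the $u_i$ together with the two-dimensional Ladyzhenskaya inequality to absorb the critical term $(\e^{-2u_1}-\e^{-2u_2})\Delta_{\bar g}u_2$ into the dissipation, one closes a Gronwall inequality for $\|w(t)\|_{L^2}^2$ that forces $w\equiv0$. The step I expect to be the main obstacle is the global $L^\infty$-control of $u$. When $\bar K\le 0$ the maximum-principle comparison above is clean and gives the bounds directly; but in the spherical case $\bar K>0$ the reaction term $\bar K(1-\e^{-2u})$ no longer has a favourable sign at a maximum, so $u$ is not obviously bounded, and one must rule out concentration of $\e^{2u}$ using the sharp Moser--Trudinger/Onofri threshold together with the energy monotonicity and, ultimately, additional monotone quantities adapted to the conformal symmetries of $S^2$. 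This is precisely where the two-dimensional geometry and the conservation of volume have to be used in tandem.
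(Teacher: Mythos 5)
First, a point of comparison that matters here: the paper does not prove this statement at all --- it is imported verbatim from Struwe \cite{Str02} as a known result, so there is no in-paper proof to measure your argument against. Judged on its own terms, your outline contains several correct and correctly executed pieces: the volume-conservation ODE $\frac{d}{dt}\vol_g=2\bar K(\vol_g-\vol_{\bar g})$, the energy monotonicity computation $\frac{d}{dt}E(u)=-\int_M(\partial_tu)^2d\mu_g$ (which is exactly the paper's identity \eqref{energie}), local existence by freezing the coefficient $\e^{-2u}$, elliptic bootstrap of $H^2$-bounds from $\|\partial_tu\|_{L^2}$ and $\|u\|_{L^\infty}$, and Gronwall uniqueness within $U_T$.

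The genuine gap is the one you half-acknowledge in your last paragraph: global a priori bounds when $\bar K>0$, i.e.\ on $S^2$ and $\R P^2$. Two of your asserted estimates fail exactly there. (i) The claim ``the energy controls $\|\nabla_{\bar g}u\|_{L^2}$'' is false for $\bar K>0$: by volume conservation and Jensen's inequality $\int_Mu\,d\mu_{\bar g}\le0$, and on the round sphere the Moser--Trudinger/Onofri constant is precisely critical, so the bound $E(u(t))\le E(u_0)$ gives no coercive control of the gradient. (ii) Your maximum-principle analysis is mislocated: at a spatial \emph{maximum} the reaction term is harmless for $\bar K>0$, since $\partial_tu\le\bar K(1-\e^{-2u_{\max}})\le\bar K$ gives at most linear growth of $u_{\max}$; the danger sits at the \emph{minimum}, where $\partial_tu_{\min}\ge\bar K(1-\e^{-2u_{\min}})$ behaves like $-\bar K\e^{-2u_{\min}}$, and the comparison ODE $y'=-\bar K\e^{-2y}$, whose solution satisfies $\e^{2y(t)}=\e^{2y(0)}-2\bar Kt$, reaches $-\infty$ in finite time. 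Nothing in your argument excludes this finite-time degeneration $u_{\min}\to-\infty$ (equivalently, curvature blow-up) on the sphere; consequently the loop through $\|\partial_tu\|_{L^2}$ and Gronwall never closes. Deferring to ``additional monotone quantities adapted to the conformal symmetries of $S^2$'' names the difficulty rather than resolving it: this is precisely the hard core of the theorem, handled in the literature by Hamilton's entropy and Harnack estimates \cite{Ham88}, by Chow's extension to arbitrary initial data on $S^2$ \cite{Cho91}, or by the integral a priori estimates and concentration analysis of the cited work \cite{Str02}. As written, your proof establishes the theorem only in the cases $\bar K\le0$.
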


For (classical) solutions of \eqref{konformRF} upon testing equation \eqref{konformRF} with $u_t\e^{2u}$ we see that the Liouville energy
\begin{equation}\label{LiEn}
E(u(t)):=\frac12\int_M(|\nabla_{\bar g}u(t)|^2_{\bar g}+2\bar Ku(t))d\mu_{\bar g}
\end{equation}
of $u(t)$ is decreasing in time and there holds the energy identity
\begin{equation}\label{energie}
\begin{split}
-\infty<E(u(T))&=E(u_0)-\int_0^T\int_M\e^{2u(t)}|\partial_tu(t)|^2d\mu_{\bar g}dt\\
&=E(u_0)-\int_0^T\int_M|\bar K-K_{g(t)}|^2d\mu_gdt\le E(u_0)<\infty,
\end{split}
\end{equation}
for all $0\le T<\infty$. 
These observations will help us find suitable conditions to guarantee uniqueness for weak solutions of the normalised Ricci flow as we will see in the next section.

\section{Weak Solutions for the Normalised Ricci Flow in Two Dimensions}
In the setting of \autoref{CS} we now choose for simplicity $\vol_{\bar g}=1=\vol_{g_0}$. In the following we call the unique, global solution $u$ of \eqref{konformRF} for data $u_0\in H^2(M,\bar g)$ provided by \autoref{Struwe} the {\em reference solution}. Since the reference solution preserves the volume we have $\vol(M,g(t))=1$ for all $t$, where $g(t)=\e^{2u(t)}\bar g$.
For a given $T>0$ and $p\in[1,\infty)$, $q\in[1,\infty]$ we write $L^p_tL^q_x$ for the space $L^p([0,T];L^q(M,\bar g))$ with the norm
\[\|u\|_{L^p_tL^q_x}:=\left(\int_0^T\|u(t)\|^p_{L^q(M,\bar g)} dt\right)^{\frac1p},\]
and analogously define $L^p_tH^k_x$, $k\in\N$. Similarly, we denote the space $L^\infty([0,T];L^p(M,\bar g))$ by $L^\infty_tL^p_x$, $p\in[1,\infty]$, with
\[\|u\|_{L^\infty_tL^p_x}:=\esssup_{t\in[0,T]}\|u(t)\|_{L^p(M,\bar g)}\]
and analogously introduce $L^\infty_tH^k_x$ for $k\in\N$.
Furthermore, we use the abbreviation $H^1_tH^1_x$ for the space $H^1((0,T);H^1(M,\bar g))$ with
\[\|u\|_{H^1_tH^1_x}:=\left(\int_0^T\int_M(|u(t)|^2+|\partial_tu(t)|^2+|\nabla_{\bar g}u(t)|^2_{\bar g}+|\nabla_{\bar g}\partial_tu(t)|^2)d\mu_{\bar g}dt\right)^{\frac12}.\]

Finally we set
\[U_T:=L^\infty_tH^2_x\cap L^2_tH^3_x\cap H^1_tH^1_x\]
and
\[V_T:=L^\infty_tH^1_x.\]

Now, we define the class of suitable weak solutions of equation \eqref{konformRF} for initial conditions $u_0\in H^2(M,\bar g)$.
By \eqref{energie} it is natural to require also for a weak solution $v$ of the normalised Ricci flow that the Liouville energy is uniformly bounded along $v$. We therefore impose the condition $v\in V_T$.

Moreover, we require that there exists a weak time derivative $\partial_t v\in L^2((0,T);L^2(M,\e^{2v}\bar g))$, i.e.
\begin{equation}\label{energieV}
\int_0^T\int_M \e^{2v(t)}|\partial_tv(t)|^2_{\bar g}d\mu_{\bar g}dt\le C<\infty.
\end{equation}

\begin{remark}
In fact, integrability of $\partial_tv\in L^1_tL^1_x$ with respect to $d\mu_{\bar g}dt$ follows immediately from \eqref{energieV} and \autoref{Reg_w}, see \autoref{RR}. 
Since $T<\infty$, we additionally have $v\in L^\infty_tH^1_x\subset L^1_tW^{1,1}_x$.
\end{remark}

\begin{remark}\label{Randwerte}
We can identify $\tilde v(t,x)=v(t)(x)$ and see that $\tilde v\in W^{1,1}((0,T)\times M)$ since
\begin{align*}
\|\tilde v(t,x)\|_{W^{1,1}}
&=\int_0^T\int_M(|\tilde v(t,x)|+|\partial_t\tilde v(t,x)|+|\nabla_{\bar g}\tilde v(t,x)|)d\mu_{\bar g}(x)dt
<\infty.
\end{align*}
Therefore by the trace theorem
$\tilde v(0,\cdot)= v(0)$ and $\tilde v(T,\cdot)=v(T)$ exist in the sense of $L^1$-traces.
\end{remark}

\begin{definition}\label{adSol}
Let $u_0\in H^2(M,\bar g)$. We call a function $v\in V_T$ an \textit{admissible weak solution of the Ricci flow \eqref{konformRF}} with initial data $u_0$ if there exists a weak time derivative $\partial_tv\in L^2((0,T);L^2(M,\e^{2v(t)}\bar g))$ of $v$, $v(0)=u_0$,
and if there holds
\begin{multline}\label{weaksol}
\int_0^T\int_M\partial_tv(t)\e^{2v(t)}\varphi(t)d\mu_{\bar g}dt\\
=-\int_0^T\int_M\left(\langle\nabla_{\bar g}v(t),\nabla_{\bar g}\varphi(t)\rangle_{\bar g}-\bar K(\e^{2v(t)}-1)\varphi(t)\right)d\mu_{\bar g}dt
\end{multline}
for any test function $\varphi\in C^\infty_c((0,T);C^\infty(M,\bar g))$.
\end{definition}

Now we can state our main result.
\begin{theorem}\label{HT}
Given $u_0\in H^2(M,\bar g)$, let $u\in U_T$ be the reference solution of \eqref{konformRF} provided by \autoref{Struwe}.
Furthermore let $v\in V_T$ be an admissible weak solution for the Ricci flow \eqref{konformRF} with initial data $u_0$.
Then we have $u\equiv v$ almost everywhere.
\end{theorem}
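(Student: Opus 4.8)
The plan is to prove the one-sided bound $v\le u$ almost everywhere and then to promote it to equality using conservation of volume. Since the reference solution $u$ is classical, it also satisfies the weak formulation \eqref{weaksol}; testing the equations for $v$ and for $u$ against the same $\varphi$ and subtracting, using $\partial_tv\,\e^{2v}-\partial_tu\,\e^{2u}=\tfrac12\partial_t(\e^{2v}-\e^{2u})$, I would obtain
\[ \int_0^T\!\!\int_M \tfrac12\partial_t\big(\e^{2v}-\e^{2u}\big)\varphi\,d\mu_{\bar g}dt = -\int_0^T\!\!\int_M\langle\nabla_{\bar g}(v-u),\nabla_{\bar g}\varphi\rangle_{\bar g}d\mu_{\bar g}dt + \int_0^T\!\!\int_M\bar K\big(\e^{2v}-\e^{2u}\big)\varphi\,d\mu_{\bar g}dt. \]
Write $w:=v-u$ and $\phi:=\e^{2v}-\e^{2u}$, so that $\phi$ and $w$ share the same sign and $\phi(0)=0$ because $v(0)=u(0)=u_0$. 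Inserting $\varphi=\phi^+$, the time-derivative term becomes $\tfrac14\int_M(\phi^+(T))^2\,d\mu_{\bar g}$, the contribution at $t=0$ vanishing since $\phi^+(0)=0$; this is the quantity I want to control.

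For the remaining terms the key identity is $\nabla_{\bar g}\phi=2\e^{2v}\nabla_{\bar g}w+2\phi\,\nabla_{\bar g}u$. Using $\nabla_{\bar g}\phi^+=\chi_{\{\phi>0\}}\nabla_{\bar g}\phi$, on $\{\phi>0\}$ one has $\langle\nabla_{\bar g}w,\nabla_{\bar g}\phi^+\rangle=2\e^{2v}|\nabla_{\bar g}w|^2+2\phi\langle\nabla_{\bar g}w,\nabla_{\bar g}u\rangle$, where the first term is favourable and the cross term is absorbed by Young's inequality $2\phi\langle\nabla_{\bar g}w,\nabla_{\bar g}u\rangle\le\e^{2v}|\nabla_{\bar g}w|^2+\e^{-2v}\phi^2|\nabla_{\bar g}u|^2$. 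This should yield
\[ \tfrac14\int_M(\phi^+(T))^2d\mu_{\bar g}+\int_0^T\!\!\int_{\{\phi>0\}}\e^{2v}|\nabla_{\bar g}w|^2d\mu_{\bar g}dt \le \int_0^T\!\!\int_{\{\phi>0\}}\e^{-2v}\phi^2|\nabla_{\bar g}u|^2d\mu_{\bar g}dt+\bar K\int_0^T\!\!\int_M(\phi^+)^2d\mu_{\bar g}dt. \]
On $\{\phi>0\}=\{v>u\}$ we have $v>u\ge-C_1$, where $C_1:=\|u\|_{L^\infty_tL^\infty_x}<\infty$ follows from the two-dimensional embedding $H^2(M,\bar g)\hookrightarrow C^0(M,\bar g)$ applied to $u\in L^\infty_tH^2_x$; hence $\e^{-2v}\le\e^{2C_1}$ there and $\e^{-2v}\phi^2\le\e^{2C_1}(\phi^+)^2$. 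Setting $y(t):=\tfrac14\int_M(\phi^+(t))^2d\mu_{\bar g}$ and discarding the good gradient term, I obtain $y'(t)\le\beta(t)y(t)$ with $\beta(t)=4\e^{2C_1}\|\nabla_{\bar g}u(t)\|_{L^\infty}^2+4\bar K$. Because $u\in L^2_tH^3_x$ gives $\nabla_{\bar g}u\in L^2_tL^\infty_x$, the coefficient $\beta$ lies in $L^1(0,T)$, and Gronwall's inequality together with $y(0)=0$ forces $y\equiv0$, i.e. $\phi^+\equiv0$ and thus $v\le u$ almost everywhere. Note that the argument is genuinely one-sided: testing with $\phi^-$ would require an upper bound for $\e^{-2v}$ on $\{v<u\}$, which is not available.

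To close the argument I would prove conservation of volume for $v$ by testing \eqref{weaksol} with $\varphi\equiv1$ (suitably cut off in time): this shows that $m(t):=\int_M\e^{2v(t)}d\mu_{\bar g}$ satisfies $m'=2\bar K(m-1)$ with $m(0)=\int_M\e^{2u_0}d\mu_{\bar g}=1$, whence $m(t)=1=\int_M\e^{2u(t)}d\mu_{\bar g}$ for almost every $t$. Combined with $v\le u$, the function $\e^{2u}-\e^{2v}\ge0$ then has vanishing spatial integral for almost every $t$ and must vanish identically, giving $u\equiv v$ almost everywhere, as claimed.

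The main obstacle is the rigorous use of $\varphi=\phi^+$, which is neither smooth nor compactly supported in time. I expect to approximate $\phi^+$ by admissible test functions, mollifying in space and cutting off in time, and to justify in this low-regularity setting the two chain rules $\partial_t\phi\cdot\phi^+=\tfrac12\partial_t(\phi^+)^2$ and $\nabla_{\bar g}\phi^+=\chi_{\{\phi>0\}}\nabla_{\bar g}\phi$, as well as the vanishing of the boundary term at $t=0$ via the $L^1$-trace coming from the space-time $W^{1,1}$-regularity of $\tilde v$ (\autoref{Randwerte}). A related delicate point is to verify that $\e^{2v}\partial_tv=\tfrac12\partial_t\e^{2v}$ holds in the weak sense required for the subtraction above, which is exactly where the integrability \eqref{energieV} of the admissible weak solution enters.
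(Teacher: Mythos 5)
Your strategy has the same skeleton as the paper's --- prove the one-sided bound $v\le u$ first, then upgrade to equality via conservation of volume (your final step reproduces \autoref{VolumenerhaltungV} and the paper's measure-theoretic conclusion) --- but your proof of the one-sided bound is a genuinely different route. The paper inserts test functions $\e^{-2u}\varphi$ into \eqref{weaksol}, subtracts the strong equation for $u$, tests the resulting identity \eqref{Gleichungw} with $w_-=\min\{u-v,0\}\in V_T$ (admissible by \autoref{TF}), handles the time term via the auxiliary function $F$ from \eqref{F}, and bounds the error terms by a factor $\delta(T)$ which is less than $1$ only for small $T$; it must therefore finish with an open-closed continuation argument in time. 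You instead subtract the two weak formulations directly, pass to the mass variable $\phi=\e^{2v}-\e^{2u}$, test with $\phi^+$, and close with Gronwall, whose coefficient is integrable because $u\in L^2_tH^3_x$ gives $\nabla_{\bar g}u\in L^2_tL^\infty_x$ in two dimensions. This buys a real simplification: no smallness of $T$ and no continuation step, since Gronwall works on all of $[0,T]$ at once. Your remark that the argument is intrinsically one-sided (only on $\{v>u\}$ does $u\in L^\infty_tL^\infty_x$ control $\e^{-2v}$) is correct and mirrors the paper's own asymmetry.

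There is, however, a genuine gap at the step you flag only vaguely, and it is worse than you indicate: $\varphi=\phi^+$ is not merely non-smooth, it in general fails to lie in $V_T=L^\infty_tH^1_x$ at all. Indeed $\nabla_{\bar g}\e^{2v}=2\e^{2v}\nabla_{\bar g}v$, and an admissible weak solution only provides $\nabla_{\bar g}v\in L^2_x$ together with $\e^{2v}\in L^p_x$, $p<\infty$ (\autoref{Reg_w}), so this gradient lies only in $L^q_x$ for $q<2$. Consequently the pairing $\int_0^T\int_M\langle\nabla_{\bar g}(v-u),\nabla_{\bar g}\phi^+\rangle_{\bar g}\,d\mu_{\bar g}dt$ is not absolutely convergent a priori, and your proposed remedy --- mollifying $\phi^+$ in space and cutting off in time --- cannot succeed: the weak formulation is continuous in the test function only with respect to the $L^2_tH^1_x$-topology (this is exactly what the density argument of \autoref{TF} exploits), and the mollifications of $\phi^+$ do not converge in that topology. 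The correct repair is truncation rather than mollification: test with $\min\{\phi^+,k\}$. On the set $\{0<\phi<k\}$ one has $\e^{2v}\le\e^{2u}+k$, hence $|\nabla_{\bar g}\min\{\phi^+,k\}|\le 2\bigl(\e^{2C_1}+k\bigr)|\nabla_{\bar g}v|+2\e^{2C_1}|\nabla_{\bar g}u|\in L^\infty_tL^2_x$, so $\min\{\phi^+,k\}\in V_T$ is admissible; your Young/Gronwall computation then goes through for each fixed $k$, with $\tfrac12(\phi^+)^2$ replaced by the truncated primitive $\Phi_k(\phi):=\int_0^\phi\min\{s^+,k\}\,ds$, whose vanishing already forces $\phi\le0$, so no limit $k\to\infty$ is even required. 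The same truncation device, combined with \eqref{energieV}, is what justifies the chain rules $\partial_t\e^{2v}=2\e^{2v}\partial_tv$ and $\partial_t\phi\cdot\phi^+=\tfrac12\partial_t(\phi^+)^2$ that you mention. With these modifications your argument closes and yields a proof of \autoref{HT} that is structurally simpler than the paper's.
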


It would be interesting if a similar uniqueness result holds within the class of weak solutions for initial data $u_0\in H^1(M,\bar g)$.

An important fact about admissible weak solutions of the normalised Ricci flow is the conservation of volume.
\begin{lemma}[Conservation of volume]\label{VolumenerhaltungV}
If $v\in V_T$ is an admissible weak solution of the Ricci flow \eqref{konformRF} with initial data $u_0$ then
\[\int_M\e^{2v(t)}d\mu_{\bar g}\equiv\vol_{g_0}=1\quad\text{for almost all}\quad t\in[0,T].\]
\end{lemma}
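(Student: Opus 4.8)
The plan is to test the weak formulation \eqref{weaksol} against spatially constant test functions and thereby reduce the volume $F(t):=\int_M\e^{2v(t)}d\mu_{\bar g}$ to an ordinary differential equation. For any $\psi\in C^\infty_c((0,T))$ the choice $\varphi(t,x)=\psi(t)$ is admissible, and since then $\nabla_{\bar g}\varphi\equiv0$, \eqref{weaksol} collapses to
\[\int_0^T\psi(t)\Big(\int_M\partial_tv(t)\,\e^{2v(t)}d\mu_{\bar g}\Big)dt=\int_0^T\psi(t)\,\bar K\big(F(t)-1\big)\,dt,\]
where I have used $\int_M\e^{2v(t)}d\mu_{\bar g}=F(t)$ and $\int_Md\mu_{\bar g}=\vol_{\bar g}=1$. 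As this holds for all $\psi$, I obtain the pointwise identity $\int_M\partial_tv(t)\,\e^{2v(t)}d\mu_{\bar g}=\bar K(F(t)-1)$ for almost every $t$.

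The next step is to identify the left-hand side with $\tfrac12F'(t)$. First, $v\in V_T=L^\infty_tH^1_x$ together with the Moser--Trudinger inequality on the closed surface $(M,\bar g)$ yields a uniform bound $\e^{2v(t)}\in L^p(M,\bar g)$ for some $p>1$, so in particular $F\in L^\infty((0,T))$ and the family $\{\e^{2v(t)}\}_t$ is uniformly integrable. Cauchy--Schwarz with \eqref{energieV} and the bound $F\in L^\infty$ then gives $\e^{2v}\partial_tv\in L^1((0,T)\times M)$. Using \autoref{Randwerte}, for almost every $x$ the map $t\mapsto\tilde v(t,x)$ is absolutely continuous on $[0,T]$, hence so is $t\mapsto\e^{2\tilde v(t,x)}$ with derivative $2\e^{2\tilde v}\partial_t\tilde v$; Fubini (legitimate by the $L^1$-bound just obtained) together with integration by parts in $t$ then shows that $F\in W^{1,1}((0,T))$ with $F'(t)=2\int_M\e^{2v}\partial_tv\,d\mu_{\bar g}$. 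Combining with the previous identity, $F$ is absolutely continuous and solves $F'(t)=2\bar K\big(F(t)-1\big)$ for almost every $t\in(0,T)$.

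It remains to fix the initial value, and this is the step I expect to be the main obstacle, since the exponential is not continuous on $L^1$. From \autoref{Randwerte} and $\partial_tv\in L^1((0,T)\times M)$ one sees that $t\mapsto v(t)$ is absolutely continuous as a map into $L^1(M,\bar g)$, so $v(t)\to u_0$ in $L^1(M,\bar g)$ as $t\to0^+$, and hence $v(t_n)\to u_0$ almost everywhere along a suitable sequence $t_n\downarrow0$. The uniform $L^p$-bound from Moser--Trudinger makes $\{\e^{2v(t_n)}\}$ uniformly integrable, so Vitali's convergence theorem gives $\e^{2v(t_n)}\to\e^{2u_0}$ in $L^1(M,\bar g)$ and therefore $F(t_n)\to\int_M\e^{2u_0}d\mu_{\bar g}=\vol_{g_0}=1$. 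Since $F$ is continuous on $[0,T]$, this forces $F(0)=1$.

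Finally, the linear equation $F'=2\bar K(F-1)$ with $F(0)=1$ has the unique absolutely continuous solution $F\equiv1$ (equivalently, $\tfrac{d}{dt}\big(\e^{-2\bar Kt}(F(t)-1)\big)=0$ together with $F(0)=1$), which is exactly the claimed conservation of volume $\int_M\e^{2v(t)}d\mu_{\bar g}\equiv1$ for almost every $t\in[0,T]$.
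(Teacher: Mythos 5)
Your proof is correct and follows essentially the same route as the paper: testing \eqref{weaksol} with spatially constant test functions $\varphi(t,x)=\psi(t)$ to reduce the volume $F(t)=\int_M\e^{2v(t)}d\mu_{\bar g}$ (the paper works with $q=F-1$) to the linear ODE $F'=2\bar K(F-1)$ with $F(0)=1$, whose unique solution is $F\equiv1$. The only difference is that you carefully justify two steps the paper treats as immediate — that $F\in W^{1,1}$ with $F'=2\int_M\e^{2v}\partial_tv\,d\mu_{\bar g}$ (via Fubini and absolute continuity on lines), and that the $L^1$-trace $v(0)=u_0$ really forces $F(0)=1$ (via uniform integrability and Vitali) — which is a welcome tightening but not a different argument.
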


\begin{proof}
Let $h(t)=\e^{2v(t)}\bar g$, $q(t):=\vol_{h(t)}-1$ and $\tilde\varphi\in C^\infty_c((0,T))$. Since $\tilde\varphi$ is independent of $x\in (M,\bar g)$ we get by using $\vol_{\bar g}=1$ that
\begin{align*}
-\int_0^Tq(t)\tilde\varphi'(t)dt&=-\int_0^T\left(\int_M(\e^{2v(t,x)}-1)d\mu_{\bar g}(x)\right)\tilde\varphi'(t)dt\\
&=2\int_0^T\int_M \partial_tv(t,x)\e^{2v(t,x)}\tilde \varphi(t)d\mu_{\bar g}(x)dt.\\
\end{align*}
Since $v$ is a weak solution of the Ricci flow \eqref{konformRF}, it fulfils by \autoref{adSol} the relation~\eqref{weaksol} for all test functions $\varphi\in C^\infty_c((0,T);C^\infty(M,\bar g))$. Evaluating this for the test function $\varphi$ given by $\varphi(t,x)\equiv\tilde\varphi(t)$, we obtain
\begin{align*}
\int_0^T\int_M \partial_tv(t,x)\e^{2v(t,x)}\tilde\varphi(t)d\mu_{\bar g}(x)dt&=\int_0^T\int_M\bar K(\e^{2v(t,x)}-1)\tilde\varphi(t)d\mu_{\bar g}(x)dt\\
&=\bar K\int_0^tq(t)\tilde\varphi(t)dt.
\end{align*}

We write this in the form
\[\int_0^T(\partial_tq(t)-2\bar Kq(t))\tilde\varphi(t)dt=0,\quad\text{for all}\quad \tilde\varphi\in C^\infty_c((0,T)).\]
Hence $q$ solves
\[\partial_t q(t)=2\bar K q(t)\]
with $q(0)=0$, and $q\equiv0$,
which concludes the proof.
\end{proof}

\section{Proof of \autoref{HT}}\label{proof}
We will now show that the reference solution $u$ is unique in the class of admissible functions $v\in V_T$.
Given $u_0\in H^2(M,\bar g)$, let $u\in U_T$ be the unique, global reference solution of the normalised Ricci flow provided by \autoref{Struwe} and let $v\in V_T$ be an admissible weak solution of the normalised Ricci flow on $[0,T]$ with the same initial condition $v(0)=u(0)=u_0\in H^2(M,\bar g)$. Let $w=u-v$ and $\partial_tw$ be the weak derivative of $w$ with respect to $t$ (which exists, since $u$ and $v$ both have a weak time derivative).

Since we know that $u$ is a strong solution of the Ricci flow, $u$ satisfies the equation
\begin{equation}\label{nochmals1}
\partial_tu(t)=\e^{-2u(t)}\Delta_{\bar g}u(t)+\bar K(1-\e^{-2u(t)}),\:t>0;\quad u(0)=u_0\in H^2(M,\bar g)
\end{equation}
pointwise almost everywhere.
Furthermore, for an admissible weak solution $v\in V_T$ of the Ricci flow on $[0,T]$ we have the relation~\eqref{weaksol}.

Using test functions of the form $\e^{-2u}\varphi$ with $\varphi\in C^\infty_c((0,T);C^\infty(M,\bar g))$ in \eqref{weaksol} (which are allowed since $\e^{-2u}\in L^\infty_tH^1_x$, see \autoref{uniform1}, and since all functions in $L^\infty_t H^1_x$ are admissible test functions, see \autoref{TF}) and subtracting this relation from the equation \eqref{nochmals1}, which we first multiply with $\varphi$ and integrate over $[0,T]\times M$, we obtain
\begin{multline}\label{Gleichungw}
\int_0^T\int_Me^{-2w(t)}\partial_tw(t)\varphi(t) d\mu_{\bar g}dt=-\int_0^T\int_M\langle\nabla_{\bar g}w(t),\nabla_{\bar g}(\e^{-2u(t)}\varphi(t))\rangle_{\bar g}d\mu_{\bar g}dt\\
+\int_0^T\int_M(\e^{-2w(t)}-1)\partial_tu(t)\varphi(t)d\mu_{\bar g}dt+\int_0^T\int_M \bar K(1-\e^{-2w(t)})\varphi(t)d\mu_{\bar g}dt
\end{multline}
for all test functions $\varphi\in C^\infty_c((0,T);C^\infty(M,\bar g))$.
We remark that the left hand side is well defined since $u\in U_T$ and
\begin{align*}
\int_0^T\int_M\e&^{-2w(t)}\partial_tw(t)\varphi(t)d\mu_{\bar g}dt\\
&=\int_0^T\int_M\e^{2v(t)}\e^{-2u(t)}\partial_tu(t)\varphi(t)d\mu_{\bar g}dt - \int_0^T\int_M\e^{2v(t)}\e^{-2u(t)}\partial_tv(t)\varphi(t)d\mu_{\bar g}dt\\
\end{align*}
with $\e^{v}\in L^\infty_tL^4_x$ and $\e^v\partial_t v\in L^2_tL^2_x$.
In fact, we can show (for details see \autoref{TF}) that the relation \eqref{Gleichungw} also holds for every $\varphi\in V_T$.

With these results we can now turn to the proof of our main result. Let $u\in U_T$ and $v\in V_T$ be as above and set $w=v-u\in V_T$. We have to show that $w\equiv0$ almost everywhere. We first show that $w\ge0$ almost everywhere on $[0,T]\times M$ for $T>0$ small enough. 

\begin{proposition}\label{wpositiv}
Given $u_0\in H^2(M,\bar g)$ let $u\in U_T$ be the reference solution of \eqref{konformRF} provided by \autoref{Struwe} and let $v\in V_T$ be an admissible weak solution of \eqref{konformRF} with initial data $v(0)=u_0=u(0)$. Then, if $T>0$ is sufficiently small, there holds $w=u-v\ge0$ almost everywhere on $[0,T]\times M$.
\end{proposition}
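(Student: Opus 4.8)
The plan is to prove $w\ge 0$ by a weighted energy estimate for the negative part $w_-:=\max(-w,0)=\max(v-u,0)$. Since $w\in V_T$ and $s\mapsto\max(-s,0)$ is $1$-Lipschitz and vanishes at $0$, we have $w_-\in V_T=L^\infty_tH^1_x$ and, because $w(0)=0$, also $w_-(0)=0$ in the sense of $L^1$-traces (cf.~\autoref{Randwerte}). By \autoref{TF} the relation \eqref{Gleichungw} is available for all test functions in $V_T$, so I insert $\varphi=-w_-$, localised to a subinterval $[0,s]\subset[0,T]$ by a temporal cut-off which is removed at the end using the time-trace framework of \autoref{Randwerte}. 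On $\{w\ge0\}$ the integrand vanishes, so every term below is effectively restricted to $\{w<0\}$, where $w=-w_-$ and $\nabla_{\bar g}w=-\nabla_{\bar g}w_-$.

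For the parabolic term I write $\e^{-2w}\partial_tw\,(-w_-)=\partial_t\Psi(w_-)$ with $\Psi(\sigma)=\int_0^\sigma\tau\e^{2\tau}\,d\tau$; since $\Psi(\sigma)\ge\tfrac12\sigma^2$, integrating in time (using $w_-(0)=0$) bounds this term from below by $\tfrac12\int_Mw_-(s)^2\,d\mu_{\bar g}$. The gradient term $-\int_0^s\!\int_M\langle\nabla_{\bar g}w,\nabla_{\bar g}(\e^{-2u}\varphi)\rangle_{\bar g}$ splits into the coercive contribution $-\int_0^s\!\int_M\e^{-2u}|\nabla_{\bar g}w_-|^2\,d\mu_{\bar g}\,dt$ and a cross term $2\int_0^s\!\int_M\e^{-2u}\langle\nabla_{\bar g}w_-,\nabla_{\bar g}u\rangle_{\bar g}\,w_-$. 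The decisive structural point is that the diffusion weight is $\e^{-2u}$: since $u\in L^\infty_tH^2_x\hookrightarrow L^\infty_tC^0(M)$ in two dimensions, $\e^{-2u}$ is bounded above and below by positive constants. I therefore set $\mathcal E(s):=\sup_{t\le s}\tfrac12\|w_-(t)\|_{L^2}^2+\int_0^s\!\int_M\e^{-2u}|\nabla_{\bar g}w_-|^2$ and aim to show $\mathcal E\equiv0$.

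The two zero-order terms on the right of \eqref{Gleichungw} combine to $\int_0^s\!\int_M(\e^{-2w}-1)(\partial_tu-\bar K)(-w_-)$, and combining \eqref{konformRF} with the Gauss equation \eqref{Gauss} gives $\partial_tu-\bar K=-K_{g(t)}$, so on $\{w<0\}$ this equals $\int_0^s\!\int_M(\e^{2w_-}-1)\,w_-\,K_{g(t)}$. This reaction term is the \emph{main obstacle}: it is super-quadratic in the low-regularity unknown (the factor $\e^{2w_-}$), while the reference curvature $K_{g(t)}=\e^{-2u}(\bar K-\Delta_{\bar g}u)$ lies only in $L^2_tL^4_x$ (since $u\in L^2_tH^3_x$). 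To control it I use that $\|w_-(t)\|_{H^1}\le\|u\|_{L^\infty_tH^1_x}+\|v\|_{L^\infty_tH^1_x}=:\Lambda$ is bounded uniformly in $t$; by the Moser--Trudinger inequality $\|\e^{2w_-(t)}\|_{L^4}\le C(\Lambda)$, so the exponential becomes a harmless constant. Estimating $\e^{2w_-}-1\le 2w_-\e^{2w_-}$ and applying Hölder together with the Gagliardo--Nirenberg inequality $\|w_-\|_{L^4}^2\le C\|w_-\|_{L^2}\|\nabla_{\bar g}w_-\|_{L^2}+C\|w_-\|_{L^2}^2$, the reaction term is bounded by $C(\Lambda)\,\|K_{g}\|_{L^2([0,s];L^4)}\,\mathcal E(s)$ plus an $\varepsilon$-fraction of the coercive Dirichlet term; the cross term is handled identically using $\nabla_{\bar g}u\in L^\infty_tL^4_x$ and a factor $|[0,s]|^{1/2}$.

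Collecting these bounds and absorbing the $\varepsilon$-fraction into the coercive term yields $\mathcal E(s)\le\phi(T)\,\mathcal E(s)$ for all $s\le T$, where $\phi(T)\to0$ as $T\to0$ because $\|K_{g}\|_{L^2([0,T];L^4)}\to0$ and the cross term carries a positive power of $|[0,T]|$. Choosing $T$ so small that $\phi(T)<1$ forces $\mathcal E\equiv0$, hence $w_-\equiv0$ and $w=u-v\ge0$ almost everywhere on $[0,T]\times M$. This is exactly where the smallness of $T$ enters: it is needed not for a Gronwall step but to render the time-integrated, exponentially-weighted reaction and cross terms small enough to be absorbed into the kinetic and coercive Dirichlet energy, which is the crux of the estimate.
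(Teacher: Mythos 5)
Your proposal is correct and follows essentially the same route as the paper: testing the weak relation \eqref{Gleichungw} with the negative part of $w$, extracting the coercive parabolic term via the primitive $\Psi(\sigma)=\int_0^\sigma\tau\e^{2\tau}\,d\tau$ (the paper's $F$ up to sign convention) and the weighted Dirichlet energy, then absorbing the cross and zero-order terms using the uniform bound $\e^{-2u}\ge C_1>0$, Gagliardo--Nirenberg, and Moser--Trudinger, with smallness of $T$ giving a contraction factor $<1$. The only differences are cosmetic: you merge the two zero-order terms into a single curvature term via $\partial_tu-\bar K=-K_{g(t)}$ (the paper estimates them separately as $B(T)$ and $C(T)$), and you extract the cross-term smallness from an explicit factor $T^{1/2}$ with $\nabla_{\bar g}u\in L^\infty_tL^4_x$ rather than from $\nabla_{\bar g}u\in L^4_tL^4_x$ over a shrinking interval.
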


\begin{remark}
A similar idea was used by Giesen and Topping in \cite{GieTop10} to show uniqueness of the Ricci flow starting on a smooth surface of uniformly negative curvature with a possibly incomplete initial metric such that the smooth flows become instantaneously complete.
\end{remark}

\begin{proof}
As we have seen above $w$ fulfils the relation \eqref{Gleichungw} for all test function $\varphi\in V_T$. 

By a result due to Stampacchia (see e.g.~\cite{Sta65}) for every function $f\in H^1(M,\bar g)$ also $f_-:=\min\{f,0\}$ is in $H^1(M,\bar g)$ and we have
\begin{equation}\label{w_-}
\nabla_{\bar g} f_-=
\begin{cases}
\phantom{-}0\quad&\text{almost everywhere on}\quad \{f\ge0\};\\
\nabla_{\bar g}f\quad&\text{almost everywhere on}\quad \{f<0\},
\end{cases}
\end{equation}
and similarly for $f\in W^{1,1}_{t,x}$.
With $w=u-v\in V_T$, we thus have $w_-\in V_T$ and therefore
\begin{multline}\label{Test}
\int_0^T\int_M\e^{-2w(t)}\partial_tw(t)w_-(t) d\mu_{\bar g}dt=-\int_0^T\int_M\langle\nabla_{\bar g}w(t),\nabla_{\bar g}(\e^{-2u(t)}w_-(t))\rangle_{\bar g}d\mu_{\bar g}dt\\
+\int_0^T\int_Mw_-(\e^{-2w(t)}-1)\partial_tu(t)d\mu_{\bar g}dt+\bar K\int_0^T\int_Mw_-(t)(1-\e^{-2w(t)})d\mu_{\bar g}dt.
\end{multline}

Since $w$, $\partial_tw\in L^1_tL^1_x$ and therefore $w\in W^{1,1}_tL^1_x$, we get that $w_-\in W^{1,1}_tL^1_x$ with
\begin{equation}
\partial_t(w_-(t))=
\begin{cases}
\phantom{-}0\quad&\text{almost everywhere on}\quad \{w\ge0\};\\
\partial_tw(t)\quad&\text{almost everywhere on}\quad \{w<0\}.
\end{cases}
\end{equation}
So we have $\partial_t w(t)=\partial_tw_-(t)+\partial_tw_+(t)$ almost everywhere with $w_+:=\max\{0,w\}$. Let
\begin{equation}\label{F}
F:(-\infty,\infty)\to \R;\quad \xi\mapsto F(\xi):=\int_0^\xi\eta\e^{-2\eta}d\eta=\frac14(1-\e^{-2\xi}(2\xi+1))\ge0.
\end{equation}

By the definition of $w_-$ and the chain rule of Sobolev functions we have
\begin{equation}\label{w-minus}
\begin{split}
\int_0^T\int_M&\e^{-2w(t)}\partial_tw(t)w_-(t)d\mu_{\bar g}dt
=\int_0^T\int_M\e^{-2w_-(t)}\partial_t(w_-(t))w_-(t)d\mu_{\bar g}dt \\
&\phantom{aa}=\int_0^T\int_M\frac{d}{dt}F(w_-(t))d\mu_{\bar g}dt=\int_MF(w_-(t))d\mu_{\bar g}\bigg|_0^T=\int_MF(w_-(T))d\mu_{\bar g},
\end{split}
\end{equation}
where we used that $w(0)=0$.

We insert \eqref{w-minus} into \eqref{Test} to obtain
\begin{multline}\label{Ab}
\int_MF(w_-(T))d\mu_{\bar g}+\int_0^T\int_M\e^{-2u(t)}|\nabla_{\bar g}w_-(t)|^2_{\bar g}d\mu_{\bar g}dt\\
\begin{multlined}[t]
=2\int_0^{T}\int_M\e^{-2u(t)}\langle\nabla_{\bar g}u(t),\nabla_{\bar g}w_-(t)\rangle_{\bar g}w_-(t)d\mu_{\bar g}dt\\
+\bar K\int_0^{T}\int_Mw_-(t)(1-\e^{-2w_-(t)})d\mu_{\bar g}dt\\
+\int_0^{T}\int_Mw_-(t)(\e^{-2w_-(t)}-1)\partial_tu(t)d\mu_{\bar g}dt.
\end{multlined}
\end{multline}

By Young's inequality $2ab\le\frac{a^2}{2}+2b^2$ we have
\begin{align*}
2\int_0^{T}\int_M&\e^{-2u(t)}\langle\nabla_{\bar g}u(t),\nabla_{\bar g}w_-(t)\rangle_{\bar g}w_-(t)d\mu_{\bar g}dt\\
&\le\frac12\int_0^T\int_M\e^{-2u(t)}|\nabla_{\bar g}w_-(t)|_{\bar g}^2d\mu_{\bar g}dt+2\int_0^T\int_M\e^{-2u(t)}|\nabla_{\bar g}u(t)|^2_{\bar g}|w_-(t)|^2d\mu_{\bar g}dt.
\end{align*}

Thus from \eqref{Ab} we arrive at
\begin{equation}\label{HG}
\int_MF(w_-(T))d\mu_{\bar g}+\frac12\int_0^T\int_M\e^{-2u(t)}|\nabla_{\bar g}w_-(t)|^2_{\bar g}d\mu_{\bar g}dt\le A(T)+B(T)+C(T),
\end{equation}
where
\begin{align*}
A(T)&:=2\int_0^{T}\int_M\e^{-2u(t)}|\nabla_{\bar g}u(t)|^2_{\bar g}|w_-(t)|^2d\mu_{\bar g}dt,\\
B(T)&:=|\bar K|\int_0^{T}\int_M|w_-(t)(1-\e^{-2w_-(t)})|d\mu_{\bar g}dt,\\
C(T)&:=\int_0^{T}\int_M|w_-(t)(\e^{-2w_-(t)}-1)\partial_tu(t)|d\mu_{\bar g}dt.
\end{align*}

We define 
\begin{equation}\label{psi}
\psi(t):=\int_M F(w_-(t))d\mu_{\bar g}.
\end{equation}
In equation \eqref{HG} we can replace $T$ by an arbitrary $t\in[0,T]$. Using that
\[\frac12\int_0^T\int_M\e^{-2u(t)}|\nabla_{\bar g}w_-(t)|^2_{\bar g}d\mu_{\bar g}dt\ge0\]
we then obtain
\[\psi(t)\le A(t)+B(t)+C(t).\]
Taking now the essential supremum on both sides and using that $A$, $B$ and $C$ are non-decreasing in $t$, we get
\begin{equation}\label{E1}
\|\psi\|_{L^\infty([0,T])}\le A(T)+B(T)+C(T).
\end{equation}

On the other hand, since $F(\xi)\ge0$ for all $\xi\in\R$, from equation \eqref{HG} we also have \begin{equation}\label{E3}
\frac12 \int_0^T\int_M\e^{-2u(t)}|\nabla_{\bar g}w_-(t)|^2_{\bar g}d\mu_{\bar g}dt\le A(T)+B(T)+C(T).
\end{equation}
The bounds \eqref{E1} and \eqref{E3} yield
\[\|\psi\|_{L^\infty([0,T])}+\frac12 \int_0^T\int_M\e^{-2u(t)}|\nabla_{\bar g}w_-(t)|^2_{\bar g}d\mu_{\bar g}dt\le 2(A(T)+B(T)+C(T)).\]
Since $u\in L^\infty_tL^\infty_x$ there exists a uniform lower bound $\e^{-2u}\ge C_1>0$.
Moreover, we have
\[F(\xi)=\frac14(1-\e^{-2\xi}(2\xi+1))\ge\frac12|\xi|^2\quad\text{for all}\quad \xi\le0.\]
We may assume that $T\le1$. With
\[0<C_2:=\frac{2}{\min\{1,C_1\}}<\infty\]
and using the estimates for $A(T)$, $B(T)$ and $C(T)$ proved in Lemmas \ref{A1} - \ref{A3} below we thus find
\begin{equation}
\begin{split}\label{HG2}
\|w_-\|^2_{L^\infty_tL^2_x}+\|\nabla_{\bar g}w_-\|^2_{L^2_tL^2_x}&\le C_2\left(\|\psi\|_{L^\infty([0,T])}+\frac12\|\nabla_{\bar g}w_-\|^2_{L^2_tL^2_x}\right)\\
&\le 2C_2(A(T)+B(T)+C(T)). \\
&\le 2C_2(\delta_A(T)+|\bar K|\delta_B(T)+\delta_C(T))(\|w_-\|^2_{L^\infty_tL^2_x}+\|\nabla_{\bar g}w_-\|^2_{L^2_tL^2_x})\\
&=:\delta(T)(\|w_-\|^2_{L^\infty_tL^2_x}+\|\nabla_{\bar g}w_-\|^2_{L^2_tL^2_x}),
\end{split}
\end{equation}
where $\delta(T):=2C_2(\delta_A(T)+|\bar K|\delta_B(T)+\delta_C(T))$.
Replacing $T$ by a smaller number $T_1>0$, if necessary, by Lemmas \ref{A1} - \ref{A3} we may assume that $T=T_1\le 1$ and
\[\delta(T):=2C_2(\delta_A(T)+|\bar K|\delta_B(T)+\delta_C(T))<1.\]
So, we see that $w_-\equiv 0$ almost everywhere on $[0,T]\times M$. 
\end{proof}

\begin{proof}[Proof of \autoref{HT}]
First we show that $w\equiv 0$ almost everywhere on $[0,T]\times M$ if $T>0$ is as in \autoref{wpositiv} so that $w\ge0$ almost everywhere on $[0,T]\times M$.
Assume by contradiction that $w\ne0$. Then there exists an open set $U_0\subset [0,T]\times M$ of positive measure $\int_{U_0}d\mu_{\bar g}dt>0$ such that
\[w(t,x)>0\quad\text{for almost all}\quad (t,x)\in U_0.\]
But then
\[\int_0^T\int_M(\e^{2u(t)}-\e^{2v(t)})d\mu_{\bar g}dt\ge\int_{U_0}(\e^{2w(t)}-1)\e^{2v(t)}d\mu_{\bar g}dt>0,\]
which contradicts \autoref{VolumenerhaltungV}.

Therefore there is no subset of $[0,T]\times M$ with positive measure where $w>0$ almost everywhere. It follows that $w\equiv0$ almost everywhere on $[0,T]\times M$. 

Let now $T>0$ be arbitrary. Using the Sobolev embedding $W^{1,1}_tL^1_x\hookrightarrow C^0_tL^1_x$ we observe that the set
\[I:=\{t\ge0\mid w\equiv0 \text{ almost everywhere on }[0,t]\times M\}\]
is closed. Starting the flow at any time $t_0\in I$ with initial condition $w(t_0,\cdot)=0$ by the argument above there is an $\varepsilon>0$ such that $w\equiv0$ almost everywhere on $[t_0,t_0+\varepsilon]\times M$. So, $I$ is open. With $0\in I$ we see that $I\neq\emptyset$. Therefore $u\equiv v$ almost everywhere on $[0,T]\times M$, which concludes the proof.
\end{proof}

\section{Estimation of Integrals}\label{AI}
For the same notion as in the proof of \autoref{wpositiv} let
\begin{align*}
A(T)&:=2\int_0^{T}\int_M\e^{-2u(t)}|\nabla_{\bar g}u(t)|^2_{\bar g}|w_-(t)|^2d\mu_{\bar g}dt,\\
B(T)&:=\bar K\int_0^{T}\int_M|w_-(t)(1-\e^{-2w_-(t)})|d\mu_{\bar g}dt,
\end{align*}
and
\begin{align*}
C(T):=\int_0^{T}\int_M|w_-(t)(\e^{-2w_-(t)}-1)\partial_tu(t)|d\mu_{\bar g}dt.
\end{align*}
In the following three lemmas we show that all three integrals $A(T)$, $B(T)$ and $C(T)$ can be estimated by an arbitrary small multiple of $\|w_-\|_{L^\infty_tL^2_x}^2+\|\nabla_{\bar g}w_-\|_{L^2_tL^2_x}^2$.
\begin{lemma}\label{A1}
We have
\begin{equation}\label{G1}
A(T)\le \delta_A(T)(\|w_-\|^2_{L^\infty_tL^2_x}+\|\nabla_{\bar g}w_-\|^2_{L^2_tL^2_x}),
\end{equation}
where $\delta_A(T)\to0$ for $T\downarrow0$.
\end{lemma}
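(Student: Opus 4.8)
The plan is to estimate $A(T)$ by distributing the spatial integrability via Hölder's inequality, using the uniform $H^2$-bound on the reference solution to control the coefficient $\e^{-2u}|\nabla_{\bar g}u|^2_{\bar g}$, and then invoking a two-dimensional Gagliardo--Nirenberg (Ladyzhenskaya) interpolation inequality on $w_-$ to trade a spatial $L^4$-norm for the two quantities appearing on the right-hand side of \eqref{G1}. The smallness factor $\delta_A(T)\to0$ will then arise automatically, since the time integration of a quantity bounded uniformly in $t$ always produces at least one positive power of $T$.

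First I would use that $u\in L^\infty_tL^\infty_x$ (because $H^2(M,\bar g)\hookrightarrow L^\infty$ in two dimensions and $u\in L^\infty_tH^2_x$), so that $\e^{-2u}\le C$ uniformly and this constant may be pulled out of $A(T)$. The point requiring care is that $\nabla_{\bar g}u$ is controlled only in $L^\infty_tH^1_x$, and $H^1$ does \emph{not} embed into $L^\infty$ in dimension two, so $|\nabla_{\bar g}u|^2_{\bar g}$ cannot simply be taken in sup-norm. Instead I would split by Hölder's inequality in the spatial variable,
\[\int_M|\nabla_{\bar g}u(t)|^2_{\bar g}|w_-(t)|^2d\mu_{\bar g}\le\|\nabla_{\bar g}u(t)\|^2_{L^4(M,\bar g)}\|w_-(t)\|^2_{L^4(M,\bar g)},\]
and bound $\|\nabla_{\bar g}u(t)\|_{L^4(M,\bar g)}\le C\|u(t)\|_{H^2(M,\bar g)}\le C$ uniformly in $t$, via the embedding $H^1\hookrightarrow L^4$ in two dimensions together with $u\in L^\infty_tH^2_x$.

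Next I would apply the interpolation inequality $\|f\|^2_{L^4(M,\bar g)}\le C_{GN}\|f\|_{L^2(M,\bar g)}\big(\|f\|_{L^2(M,\bar g)}+\|\nabla_{\bar g}f\|_{L^2(M,\bar g)}\big)$ to $f=w_-(t)$, which after absorbing the uniform constants gives
\[A(T)\le C\int_0^T\|w_-(t)\|_{L^2(M,\bar g)}\big(\|w_-(t)\|_{L^2(M,\bar g)}+\|\nabla_{\bar g}w_-(t)\|_{L^2(M,\bar g)}\big)dt.\]
The remaining step is the time integration. The first contribution is bounded by $T\|w_-\|^2_{L^\infty_tL^2_x}$; for the cross term I would factor out $\|w_-\|_{L^\infty_tL^2_x}$, apply Cauchy--Schwarz in $t$ to obtain $\int_0^T\|\nabla_{\bar g}w_-(t)\|_{L^2(M,\bar g)}dt\le T^{1/2}\|\nabla_{\bar g}w_-\|_{L^2_tL^2_x}$, and then use Young's inequality to distribute the result between $\|w_-\|^2_{L^\infty_tL^2_x}$ and $\|\nabla_{\bar g}w_-\|^2_{L^2_tL^2_x}$. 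Collecting terms yields \eqref{G1} with $\delta_A(T)=C\big(T+T^{1/2}\big)$, which tends to $0$ as $T\downarrow0$.

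There is no serious obstacle in this argument; the only delicate point is the borderline failure of the embedding $H^1\hookrightarrow L^\infty$ in dimension two, which forces the Hölder splitting with $L^4$-norms in place of a naive sup-norm estimate on $\nabla_{\bar g}u$. Everything else is routine bookkeeping, with the smallness of $\delta_A(T)$ following directly from the short length of the time interval.
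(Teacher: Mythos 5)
Your proof is correct, and its core is the same as the paper's: pull out $\e^{-2u}$ using $u\in L^\infty_tL^\infty_x$, split the integrand by H\"older with $L^4$-exponents, and control $w_-$ in $L^4$ by the two-dimensional Gagliardo--Nirenberg interpolation. The difference lies in how the smallness $\delta_A(T)\to0$ is extracted. The paper applies H\"older in space--time at once, bounding
\[A(T)\le C\left(\int_0^T\int_M|\nabla_{\bar g}u(t)|^4_{\bar g}\,d\mu_{\bar g}dt\right)^{\frac12}\left(\int_0^T\int_M|w_-(t)|^4\,d\mu_{\bar g}dt\right)^{\frac12},\]
then invokes its space--time Sobolev inequality \eqref{Sobolev} of \autoref{SobolevLemma} on $w_-$; the resulting factor $\delta_A(T)=C\bigl(\int_0^T\int_M|\nabla_{\bar g}u|^4_{\bar g}\,d\mu_{\bar g}dt\bigr)^{1/2}$ tends to zero only qualitatively, by absolute continuity of the integral of the fixed function $|\nabla_{\bar g}u|^4\in L^1$ (using $\nabla_{\bar g}u\in L^\infty_tH^1_x\hookrightarrow L^4_tL^4_x$), with no explicit rate. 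You instead apply H\"older per time slice, use the uniform-in-time bound $\|\nabla_{\bar g}u(t)\|_{L^4(M,\bar g)}\le C\|u(t)\|_{H^2(M,\bar g)}\le C$ coming from $u\in L^\infty_tH^2_x$, and then do the time integration by hand with Cauchy--Schwarz and Young; this buys you the explicit rate $\delta_A(T)\le C\,(T+T^{1/2})$, at the price of using the slightly stronger information $\nabla_{\bar g}u\in L^\infty_tL^4_x$ rather than merely $\nabla_{\bar g}u\in L^4_tL^4_x$. Both routes are valid here since $u\in U_T$ supplies either bound; the paper's formulation has the structural advantage that the identical space--time Sobolev lemma is recycled verbatim in the estimates of $B(T)$ and $C(T)$, while yours yields a quantitative dependence of $\delta_A$ on $T$.
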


\begin{proof}
By using that $u\in L^\infty_tL^\infty_x$ we get with H\"older's inequality 
\begin{align*}
A(T)&=2\int_0^{T}\int_M\e^{-2u(t)}|\nabla_{\bar g}u(t)|^2_{\bar g}|w_-(t)|^2d\mu_{\bar g}dt\\
&\le C\int_0^T\int_M|\nabla_{\bar g}u(t)|^2_{\bar g}|w_-(t)|^2d\mu_{\bar g}dt\\
&\le C\left(\int_0^T\int_M|\nabla_{\bar g}u(t)|^4_{\bar g}d\mu_{\bar g}dt\right)^{\frac12}\left(\int_0^T\int_M|w_-(t)|^4d\mu_{\bar g}dt\right)^{\frac12}.\\
\end{align*}
So with Sobolev's inequality \eqref{Sobolev} we get 
\[A(T)\le \delta_A(T)(\|w_-\|^2_{L^\infty_tL^2_x}+\|\nabla_{\bar g}w_-\|^2_{L^2_tL^2_x}),\]
where (with a different constant $C$)
\[\delta_A(T)=C\left(\int_0^T\int_M|\nabla_{\bar g} u(t)|^4_{\bar g}d\mu_{\bar g}dt\right)^{\frac12}\]
tends to zero for $T\downarrow0$ since $\nabla_{\bar g}u\in L^\infty_tH^1_x\hookrightarrow L^4_tL^4_x$ by \autoref{SobolevLemma}.
\end{proof}

\begin{lemma}\label{A2}
We have
\begin{equation}\label{G2}
B(T)\le \delta_B(T)|\bar K|(\|w_-\|^2_{L^\infty_tL^2_x}+\|\nabla_{\bar g}w_-\|^2_{L^2_tL^2_x}),
\end{equation}
where $\delta_B(T)\to0$ for $T\downarrow0$.
\end{lemma}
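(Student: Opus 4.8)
The plan is to reduce the integrand of $B(T)$ to a quadratic expression in $w_-$ multiplied by an exponential weight, and then to absorb that weight using the integrability furnished by the Moser--Trudinger inequality, gaining a small factor $T^{1/2}$ from the short time interval exactly as in \autoref{A1}.

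Since $w_-(t)\le 0$ pointwise, I would start from the elementary one-variable estimate valid for every $\xi\le 0$, namely $|\xi(1-\e^{-2\xi})|=|\xi|(\e^{-2\xi}-1)\le 2\xi^2\e^{-2\xi}$, which follows at once from $1-\e^{2\xi}\le 2|\xi|$. Applying it pointwise with $\xi=w_-(t,x)$ gives
\[
|w_-(1-\e^{-2w_-})|\le 2|w_-|^2\e^{-2w_-}\quad\text{a.e.\ on }[0,T]\times M,
\]
so that $B(T)\le 2|\bar K|\int_0^T\int_M|w_-|^2\e^{-2w_-}\,d\mu_{\bar g}\,dt$. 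Next I would control the weight: recalling $w=u-v$, on $\{w<0\}$ one has $\e^{-2w_-}=\e^{-2u}\e^{2v}$ while $\e^{-2w_-}=1$ elsewhere, so $u\in L^\infty_tL^\infty_x$ yields the uniform pointwise bound $\e^{-2w_-}\le C(1+\e^{2v})$. The Moser--Trudinger inequality together with the uniform bound $\|v\|_{L^\infty_tH^1_x}\le C$ (forced by $v\in V_T$ and \autoref{VolumenerhaltungV}) then gives $\sup_t\|\e^{-2w_-(t)}\|_{L^2(M,\bar g)}\le C$.

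With this in hand, Hölder's inequality in space (with conjugate exponents $2$ and $2$) applied to $|w_-|^2$ and $\e^{-2w_-}$ gives
\[
\int_M|w_-|^2\e^{-2w_-}\,d\mu_{\bar g}\le \|\e^{-2w_-(t)}\|_{L^2(M,\bar g)}\,\|w_-(t)\|_{L^4(M,\bar g)}^2\le C\,\|w_-(t)\|_{L^4(M,\bar g)}^2 .
\]
Finally I would gain the vanishing factor from the time integration: by Cauchy--Schwarz in $t$ and the Sobolev interpolation inequality \eqref{Sobolev} used in \autoref{A1},
\[
\int_0^T\|w_-(t)\|_{L^4(M,\bar g)}^2\,dt\le T^{1/2}\|w_-\|_{L^4_tL^4_x}^2\le C\,T^{1/2}\big(\|w_-\|^2_{L^\infty_tL^2_x}+\|\nabla_{\bar g}w_-\|^2_{L^2_tL^2_x}\big),
\]
which yields the claim with $\delta_B(T)=C\,T^{1/2}\to 0$ as $T\downarrow 0$.

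The only genuinely non-routine step is the uniform $L^2$-bound on the weight $\e^{-2w_-}$. Unlike the reference solution $u$, the weak solution $v$ lies only in $L^\infty_tH^1_x$ and need not be bounded, so $w_-$ can be arbitrarily negative and the factor $\e^{-2w_-}$ cannot be controlled by a crude pointwise bound; it is precisely the Moser--Trudinger integrability of $\e^{2v}$ that places the weight in $L^\infty_tL^2_x$ and makes the quadratic estimate close. Everything else is the same Hölder-plus-Sobolev scheme already carried out in \autoref{A1}.
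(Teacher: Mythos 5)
Your proof is correct and follows essentially the same scheme as the paper's: the elementary bound $|w_-(1-\e^{-2w_-})|\le 2|w_-|^2\e^{2|w_-|}$, H\"older's inequality, the Sobolev inequality \eqref{Sobolev}, and Moser--Trudinger integrability of the exponential weight — the paper simply applies \autoref{Reg_w} directly to $w_-\in V_T$ and takes H\"older jointly in space-time, whereas you route the weight through $\e^{2v}$ and the boundedness of $u$ and split H\"older into space and then time, which are cosmetic differences. One small remark: the bound $\|v\|_{L^\infty_tH^1_x}<\infty$ is just the definition of $V_T$, so \autoref{VolumenerhaltungV} is not needed for that step.
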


\begin{proof}
We use the estimate
\begin{equation}\label{exp}
|1-\e^{-2w_-}|\le 2|w_-|\sum_{k\ge0}\frac{(2|w_-|)^k}{(k+1)!}\le2|w_-|\e^{2|w_-|}
\end{equation}
and again H\"older's inequality followed by Sobolev's inequality \eqref{Sobolev} to get
\begin{align*}
B(T)&=|\bar K|\int_0^T\int_M|w_-(t)(1-\e^{-2w_-(t)})|d\mu_{\bar g}dt\\
&\le 2|\bar K|\int_0^T\int_M|w_-(t)|^2\e^{2|w_-(t)|}d\mu_{\bar g}dt\\
&\le 2|\bar K|\left(\int_0^T\int_M|w_-(t)|^4d\mu_{\bar g}\right)^{\frac12}\left(\int_0^T\int_M\e^{4|w_-(t)|}d\mu_{\bar g}dt\right)^{\frac12}\\
&\le \delta_B(T)|\bar K|(\|w_-\|^2_{L^\infty_tL^2_x}+\|\nabla_{\bar g}w_-\|^2_{L^2_tL^2_x}),
\end{align*}
where 
\[\delta_B(T)=C\left(\int_0^T\int_M\e^{4|w_-(t)|}d\mu_{\bar g}dt\right)^{\frac12}.\]
So, also $\delta_B(T)$ tends to zero for $T\downarrow0$ since $w_-\in V_T$ and therefore with \autoref{Reg_w} $\e^{\pm w_-}\in L^\infty_tL^p_x$ for all $p\in[1,\infty)$.
\end{proof}

\begin{lemma}\label{A3}
We have
\begin{equation}\label{G3}
C(T)\le\delta_C(T)(\|w_-\|^2_{L^\infty_tL^2_x}+\|\nabla_{\bar g}w_-\|^2_{L^2_tL^2_x}),
\end{equation}
where $\delta_C(T)\to0$ for $T\downarrow0$.
\end{lemma}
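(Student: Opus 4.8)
The plan is to follow the same scheme as in Lemmas \ref{A1} and \ref{A2}: bound the integrand pointwise, split by H\"older's inequality first in space and then in time, and finally convert the resulting spatial $L^4$-norm of $w_-$ into the target quantity via the Sobolev inequality \eqref{Sobolev}. First I would use the elementary estimate \eqref{exp}, which gives $|\e^{-2w_-}-1|\le 2|w_-|\e^{2|w_-|}$ and hence
\[
C(T)\le 2\int_0^T\int_M|w_-(t)|^2\e^{2|w_-(t)|}|\partial_tu(t)|\,d\mu_{\bar g}dt.
\]

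Next, for fixed $t$ I would apply H\"older's inequality in space to the three factors $|w_-|^2$, $\e^{2|w_-|}$ and $|\partial_tu|$ with exponents $2$, $4$, $4$, obtaining $\int_M|w_-|^2\e^{2|w_-|}|\partial_tu|\,d\mu_{\bar g}\le\|w_-\|_{L^4}^2\|\e^{2|w_-|}\|_{L^4}\|\partial_tu\|_{L^4}$. The crucial regularity input here is that the reference solution satisfies $u\in U_T\subset H^1_tH^1_x$, so that $\partial_tu\in L^2_tH^1_x\hookrightarrow L^2_tL^4_x$ in two dimensions. Integrating in time and applying H\"older once more with exponents $2$, $\infty$, $2$ for the factors $\|w_-(t)\|_{L^4}^2$, $\|\e^{2|w_-(t)|}\|_{L^4}$, $\|\partial_tu(t)\|_{L^4}$ yields
\[
C(T)\le 2\,\|w_-\|_{L^4_tL^4_x}^2\,\|\e^{2|w_-|}\|_{L^\infty_tL^4_x}\,\|\partial_tu\|_{L^2_tL^4_x}.
\]

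Finally I would invoke the Sobolev inequality \eqref{Sobolev} exactly as in Lemma \ref{A1} to bound $\|w_-\|_{L^4_tL^4_x}^2\le C(\|w_-\|^2_{L^\infty_tL^2_x}+\|\nabla_{\bar g}w_-\|^2_{L^2_tL^2_x})$, which produces the claimed inequality \eqref{G3} with
\[
\delta_C(T):=C\,\|\e^{2|w_-|}\|_{L^\infty_tL^4_x}\,\|\partial_tu\|_{L^2_tL^4_x}.
\]

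The main point left to verify is that $\delta_C(T)\to0$ as $T\downarrow0$. The factor $\|\e^{2|w_-|}\|_{L^\infty_tL^4_x}$ stays bounded (uniformly for $T\le1$), since $w_-\le 0$ gives $\e^{2|w_-|}=\e^{-2w_-}$, and because $w_-\in V_T=L^\infty_tH^1_x$ the exponential-integrability statement \autoref{Reg_w} yields $\e^{-2w_-}\in L^\infty_tL^p_x$ for every $p<\infty$. The vanishing comes from the other factor: since $\partial_tu\in L^2_tL^4_x$, the quantity $\|\partial_tu\|_{L^2_tL^4_x}^2=\int_0^T\|\partial_tu(t)\|_{L^4}^2dt$ is the tail of an integrable function of $t$ and therefore tends to $0$ as $T\downarrow0$. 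I expect this last step --- extracting the $T$-dependent vanishing factor, and in particular recognising that one must use the full time-regularity $\partial_tu\in L^2_tL^4_x$ rather than merely $\partial_tu\in L^\infty_tL^2_x$ coming from the equation in order to make the H\"older split close --- to be the only genuinely delicate point; the remaining estimates are routine.
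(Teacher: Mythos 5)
Your proof is correct and follows essentially the same route as the paper: the elementary bound \eqref{exp}, H\"older to isolate $\|w_-\|^2_{L^4_tL^4_x}$, the Sobolev inequality \eqref{Sobolev}, the regularity $\partial_tu\in L^2_tH^1_x\hookrightarrow L^2_tL^4_x$, and \autoref{Reg_w} for the exponential factor, with the vanishing of $\delta_C(T)$ coming from the tail of $\|\partial_tu\|_{L^2_tL^4_x}$. The only difference is the order of the H\"older splittings (space with exponents $2,4,4$ then time, versus the paper's space-time split followed by a spatial split), and in fact your factor $\|\e^{2|w_-|}\|_{L^\infty_tL^4_x}$ coincides with the paper's $\|\e^{|w_-|}\|^2_{L^\infty_tL^8_x}$.
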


\begin{proof}
Since $u\in L^\infty_tH^2_x\cap L^2_tH^3_x\cap H^1_tH^1_x$ and therefore $\partial_t u\in L^2_tH^1_x$ we see by using \autoref{kleineReg} that $\partial_tu\in L^2_tL^4_x$.
So, using \eqref{exp} we can estimate
\begin{align*}
C(T)&=\int_0^T\int_M|w_-(t)(\e^{-2w_-(t)}-1)\partial_tu(t)|d\mu_{\bar g}dt\\
&\le 2\int_0^T\int_M|w_-(t)|^2\e^{2|w_-(t)|}|\partial_tu(t)|d\mu_{\bar g}dt\\
&\le 2\left(\int_0^T\int_M|w_-(t)|^4d\mu_{\bar g}dt\right)^{\frac12}\left(\int_0^T\int_M\e^{4|w_-(t)|}|\partial_tu(t)|^2d\mu_{\bar g}dt\right)^{\frac12}.
\end{align*}
With
\[\int_0^T\int_M\e^{4|w_-(t)|}|\partial_tu(t)|^2d\mu_{\bar g}dt\le\int_0^T\left[\left(\int_M\e^{8|w_-(t)|}d\mu_{\bar g}\right)^{\frac12}\left(\int_M|\partial_tu(t)|^4d\mu_{\bar g}\right)^{\frac12}\right]dt\]
we finally get
\[C(T)\le 2\|\e^{|w_-|}\|^2_{L^\infty_tL^8_x}\|\partial_tu\|_{L^2_tL^4_x}\|w_-\|^2_{L^4_tL^4_x}\le \delta_C(T)(\|w_-\|^2_{L^\infty_tL^2_x}+\|\nabla_{\bar g}w_-\|^2_{L^2_tL^2_x}),\]
where
\[\delta_C(T)=C\|\e^{|w_-|}\|^2_{L^\infty_tL^8_x}\|\partial_tu\|_{L^2_tL^4_x}.\]
Since $\partial_tu\in L^2_tL^4_x$, and $\e^{\pm w_-}\in L^\infty_tL^p_x$ for all $p\in[1,\infty)$ by \autoref{Reg_w} we see that $\delta_C(T)$ tends to zero for $T\downarrow0$.
\end{proof}

\appendix
\section{Appendix}

\subsection{Regularity Results}\label{RR}
For the proof of our main theorem, we list some useful properties of $H^1$-functions 
and some further regularity results.

\begin{lemma}[Gagliardo--Nirenberg inequality, \cite{CecMon08}]\label{eindSob}
There exists a constant $C=C(M,\bar g)$ such that we have for every $f\in H^1(M,\bar g)$ the inequality
\[\|f\|^4_{L^4(M,\bar g)}
\le C\|f\|^2_{L^2(M,\bar g)}\|f\|^2_{H^1(M,\bar g)}.\]
\end{lemma}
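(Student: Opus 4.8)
The plan is to reduce the inequality on $(M,\bar g)$ to the classical two-dimensional Ladyzhenskaya inequality on $\R^2$ by means of a finite atlas together with a subordinate partition of unity, and then to pass from smooth functions to general $f\in H^1(M,\bar g)$ by density. The key point is that the exponents appearing are exactly the scale-invariant ones for Gagliardo--Nirenberg interpolation in dimension two, so the Euclidean estimate already carries the correct homogeneous structure.

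First I would establish the model estimate on $\R^2$. For $f\in C^\infty_c(\R^2)$ the fundamental theorem of calculus gives
\[f(x_1,x_2)^2\le 2\int_{\R}|f\,\partial_1 f|(s,x_2)\d s=:A(x_2),\qquad f(x_1,x_2)^2\le 2\int_{\R}|f\,\partial_2 f|(x_1,t)\d t=:B(x_1),\]
whence $f^4\le A(x_2)B(x_1)$ and, by Fubini together with the Cauchy--Schwarz inequality,
\[\int_{\R^2}f^4\d x\le\Big(\int_{\R}A\,\d x_2\Big)\Big(\int_{\R}B\,\d x_1\Big)\le 4\|f\|_{L^2(\R^2)}^2\,\|\partial_1 f\|_{L^2(\R^2)}\|\partial_2 f\|_{L^2(\R^2)}\le 4\|f\|_{L^2(\R^2)}^2\|\nabla f\|_{L^2(\R^2)}^2.\]
This is precisely the squared form of the claimed inequality, with an $L^2$ term in place of the full $H^1$ term; the latter is forced only on $M$, since constant functions show that $\|\nabla_{\bar g}f\|_{L^2}$ alone cannot control $\|f\|_{L^4}$ on a closed manifold.

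To transfer this to $M$ I would fix a finite cover by coordinate charts $\{U_\alpha\}_{\alpha=1}^N$ and a subordinate smooth partition of unity $\{\chi_\alpha\}$ with $\sum_\alpha\chi_\alpha\equiv1$. By compactness of $M$ the metric coefficients of $\bar g$ and their inverses are bounded above and below on each chart, so the $L^2$-, $L^4$- and gradient-norms taken with respect to $\bar g$ and with respect to the Euclidean metric are uniformly equivalent there. Each $\chi_\alpha f$ has compact support inside a chart, so applying the model estimate and the product rule $\nabla(\chi_\alpha f)=f\nabla\chi_\alpha+\chi_\alpha\nabla f$ yields
\[\|\chi_\alpha f\|_{L^4(M,\bar g)}^4\le C\|\chi_\alpha f\|_{L^2(M,\bar g)}^2\|\nabla_{\bar g}(\chi_\alpha f)\|_{L^2(M,\bar g)}^2\le C'\|f\|_{L^2(M,\bar g)}^2\|f\|_{H^1(M,\bar g)}^2,\]
with constants independent of $f$ (here the $L^2$-part of $\|f\|_{H^1}$ absorbs the contribution of $\nabla\chi_\alpha$). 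Summing the triangle inequality $\|f\|_{L^4}\le\sum_\alpha\|\chi_\alpha f\|_{L^4}$ over the $N$ charts and raising to the fourth power gives the assertion for smooth $f$, with $C$ depending on $N$, the chart geometry and $\bar g$.

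Finally, for general $f\in H^1(M,\bar g)$ I would take smooth $f_k\to f$ in $H^1(M,\bar g)$; the inequality applied to $f_k-f_l$ shows that $(f_k)$ is Cauchy in $L^4$, hence $f_k\to f$ in $L^4$ and the estimate passes to the limit. The main obstacle is the bookkeeping for the partition of unity — in particular checking that the Euclidean and Riemannian norms are comparable with constants uniform over all charts, and that the cutoff derivatives are controlled by the full $H^1$-norm — rather than any genuine analytic difficulty; the slicing step itself is elementary.
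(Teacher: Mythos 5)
Your argument is correct, but note that the paper does not prove this lemma at all: it is quoted directly from the reference \cite{CecMon08}, so your proposal is a self-contained replacement for a citation rather than an alternative to a proof in the text. Your three steps are all sound. The slicing computation (fundamental theorem of calculus in each variable, Fubini, Cauchy--Schwarz) is the standard proof of the Ladyzhenskaya inequality $\|f\|_{L^4(\R^2)}^4\le C\|f\|_{L^2(\R^2)}^2\|\nabla f\|_{L^2(\R^2)}^2$ for $f\in C^\infty_c(\R^2)$. The localisation is unproblematic: each $\supp\chi_\alpha$ is compact inside its chart and the atlas is finite, so the Euclidean--Riemannian norm equivalence and the bounds on $\|\nabla_{\bar g}\chi_\alpha\|_{L^\infty}$ hold with constants that are maxima over finitely many charts; the product rule then correctly forces the full $H^1$-norm on the right-hand side, and your remark that $f\equiv1$ rules out a pure gradient term on a closed manifold explains why this loss is unavoidable. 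The density step is fine as well, since $C^\infty(M)$ is dense in $H^1(M,\bar g)$ and applying the inequality to $f_k-f_l$ upgrades $H^1$-convergence to $L^4$-convergence, so the estimate passes to the limit. What the citation buys the paper is brevity (the cited work also addresses sharp constants, which are irrelevant here since the lemma only asserts existence of some $C=C(M,\bar g)$); what your proof buys is self-containedness, at the cost of the chart bookkeeping you describe.
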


\begin{remark}\label{kleineReg}
With \autoref{eindSob} we therefore have $L^p_tH^1_x\subset L^p_tL^4_x$ for all $p\in[1,\infty]$.
\end{remark}

\begin{lemma}[Sobolev inequality]\label{SobolevLemma}
There exists a constant $C>0$ such that for every $f\in V_T$, $T\le1$, we have
\begin{equation}\label{Sobolev}
\|f\|^2_{L^4_tL^4_x}\le C(\|f\|^2_{L^\infty_tL^2_x}+\|\nabla_{\bar g}f\|^2_{L^2_tL^2_x})<\infty.
\end{equation}
\end{lemma}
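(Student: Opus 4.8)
The plan is to apply the Gagliardo--Nirenberg inequality from \autoref{eindSob} pointwise in time and then integrate over $[0,T]$. First I would fix $t\in[0,T]$ and write $\|f(t)\|_{H^1(M,\bar g)}^2=\|f(t)\|_{L^2(M,\bar g)}^2+\|\nabla_{\bar g}f(t)\|_{L^2(M,\bar g)}^2$, so that \autoref{eindSob} applied to $f(t)\in H^1(M,\bar g)$ gives
\[\|f(t)\|_{L^4(M,\bar g)}^4\le C\|f(t)\|_{L^2(M,\bar g)}^2\bigl(\|f(t)\|_{L^2(M,\bar g)}^2+\|\nabla_{\bar g}f(t)\|_{L^2(M,\bar g)}^2\bigr).\]
Integrating over $[0,T]$ and pulling out the factor $\|f(t)\|_{L^2(M,\bar g)}^2\le\|f\|_{L^\infty_tL^2_x}^2$ from the essential supremum, I obtain
\[\int_0^T\|f(t)\|_{L^4(M,\bar g)}^4\,dt\le C\|f\|_{L^\infty_tL^2_x}^2\int_0^T\bigl(\|f(t)\|_{L^2(M,\bar g)}^2+\|\nabla_{\bar g}f(t)\|_{L^2(M,\bar g)}^2\bigr)\,dt.\]

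The second key step is to control the remaining time integral by the two norms on the right-hand side of \eqref{Sobolev}. Here the hypothesis $T\le1$ is what I would exploit: it gives $\int_0^T\|f(t)\|_{L^2(M,\bar g)}^2\,dt\le T\|f\|_{L^\infty_tL^2_x}^2\le\|f\|_{L^\infty_tL^2_x}^2$, whereas $\int_0^T\|\nabla_{\bar g}f(t)\|_{L^2(M,\bar g)}^2\,dt=\|\nabla_{\bar g}f\|_{L^2_tL^2_x}^2$ directly by definition. Combining these two bounds yields
\[\int_0^T\|f(t)\|_{L^4(M,\bar g)}^4\,dt\le C\|f\|_{L^\infty_tL^2_x}^2\bigl(\|f\|_{L^\infty_tL^2_x}^2+\|\nabla_{\bar g}f\|_{L^2_tL^2_x}^2\bigr).\]

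Finally I would take square roots, recalling that $\|f\|_{L^4_tL^4_x}^2=\bigl(\int_0^T\|f(t)\|_{L^4(M,\bar g)}^4\,dt\bigr)^{1/2}$, and use the elementary estimate $a(a^2+b^2)^{1/2}\le a^2+b^2$ (valid since $a\le(a^2+b^2)^{1/2}$) with $a=\|f\|_{L^\infty_tL^2_x}$ and $b=\|\nabla_{\bar g}f\|_{L^2_tL^2_x}$. This gives
\[\|f\|_{L^4_tL^4_x}^2\le C^{1/2}\bigl(\|f\|_{L^\infty_tL^2_x}^2+\|\nabla_{\bar g}f\|_{L^2_tL^2_x}^2\bigr),\]
which is precisely \eqref{Sobolev} after renaming the constant. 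Finiteness of the right-hand side follows from $f\in V_T=L^\infty_tH^1_x$ together with $T\le1$, since $\|\nabla_{\bar g}f\|_{L^2_tL^2_x}\le T^{1/2}\|\nabla_{\bar g}f\|_{L^\infty_tL^2_x}<\infty$. There is no serious analytic obstacle in this argument; the only point requiring care is the bookkeeping that lets the two distinct $L^2$-in-space contributions — one measured in $L^\infty$ and one in $L^2$ in time — be absorbed into the single right-hand side, and this is exactly where the normalisation $T\le1$ enters.
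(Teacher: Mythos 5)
Your proof is correct and follows essentially the same route as the paper: apply the Gagliardo--Nirenberg inequality (\autoref{eindSob}) pointwise in time, pull the $L^\infty_tL^2_x$ factor out of the time integral, use $T\le1$ to absorb $\int_0^T\|f(t)\|^2_{L^2(M,\bar g)}\,dt$, and then conclude with an elementary absorption step. The only cosmetic difference is that the paper closes with Young's inequality where you use the equivalent estimate $a(a^2+b^2)^{1/2}\le a^2+b^2$.
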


\begin{proof}
With \autoref{eindSob} we have for all $T\le1$ that
\begin{align*}
\|f\|^4_{L^4_tL^4_x}&=\int_0^T\|f(t)\|^4_{L^4(M,\bar g)}dt \le C\int_0^T\|f(t)\|^2_{L^2(M,\bar g)}\|f(t)\|^2_{H^1(M,\bar g)}dt\\
&\le C\|f\|^2_{L^\infty_tL^2_x}\int_0^T(\|f(t)\|_{L^2(M,\bar g)}^2+\|\nabla_{\bar g}f(t)\|^2_{L^2(M,\bar g)})dt\\
&\le C\cdot T\:\|f\|^4_{L^\infty_tL^2_x}+C\|f\|^2_{L^\infty_tL^2_x}\|\nabla_{\bar g}f\|^2_{L^2_tL^2_x}\\
&\le C\left(\|f\|^4_{L^\infty_tL^2_x}+\|f\|^2_{L^\infty_tL^2_x}\|\nabla_{\bar g}f\|^2_{L^2_tL^2_x}\right).
\end{align*}
By using Young's inequality we have
\[\|f\|_{L^\infty_tL^2_x}\|\nabla_{\bar g}f\|_{L^2_tL^2_x}\le \frac12\left(\|f\|^2_{L^\infty_tL^2_x}+\|\nabla_{\bar g}f\|^2_{L^2_tL^2_x}\right)\]
and therefore
\[\|f\|^2_{L^4_tL^4_x}
\le C(\|f\|^2_{L^\infty_tL^2_x}+\|\nabla_{\bar g}f\|^2_{L^2_tL^2_x})\]
and this is finite since $f\in V_T\subset L^p_tH^1_x$ for all $p<\infty$ and any $T\le1$.
\end{proof}

\begin{lemma}\label{uniform1}
We have the embedding $U_T\hookrightarrow L^\infty_tL^\infty_x$.
\end{lemma}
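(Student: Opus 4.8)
The plan is to deduce this space–time embedding from the purely spatial Sobolev embedding applied for almost every fixed time. Since $U_T\subset L^\infty_tH^2_x$, it suffices to bound $\|f\|_{L^\infty_tL^\infty_x}$ in terms of $\|f\|_{L^\infty_tH^2_x}$ for an arbitrary $f\in U_T$; the remaining two components $L^2_tH^3_x$ and $H^1_tH^1_x$ of $U_T$ play no role here.

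First I would invoke the fact that on the closed surface $(M,\bar g)$ we are in dimension $n=2$, so that $k=2>1=n/2$ and hence the Sobolev embedding theorem yields a continuous inclusion $H^2(M,\bar g)\hookrightarrow C^0(M,\bar g)\hookrightarrow L^\infty(M,\bar g)$. In particular there is a constant $C=C(M,\bar g)$ with
\[\|h\|_{L^\infty(M,\bar g)}\le C\|h\|_{H^2(M,\bar g)}\quad\text{for all }h\in H^2(M,\bar g).\]

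Next I would apply this inequality pointwise in time. For $f\in U_T$ we have $f(t)\in H^2(M,\bar g)$ for almost every $t\in[0,T]$, with $\esssup_{t}\|f(t)\|_{H^2(M,\bar g)}<\infty$ by definition of $L^\infty_tH^2_x$. The spatial embedding then gives $\|f(t)\|_{L^\infty(M,\bar g)}\le C\|f(t)\|_{H^2(M,\bar g)}$ for almost every $t$, and taking the essential supremum over $t\in[0,T]$ on both sides yields
\[\|f\|_{L^\infty_tL^\infty_x}\le C\|f\|_{L^\infty_tH^2_x}\le C\|f\|_{U_T}<\infty,\]
which is the asserted embedding.

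I do not expect a serious obstacle: the statement is essentially the two-dimensional Sobolev embedding $H^2\hookrightarrow L^\infty$ made uniform in time. The only point requiring a little care is measurability, namely that $t\mapsto\|f(t)\|_{L^\infty(M,\bar g)}$ is measurable so that its essential supremum is meaningful; this follows from the measurability of $t\mapsto f(t)\in H^2(M,\bar g)$ together with the continuity of the embedding operator $H^2(M,\bar g)\hookrightarrow L^\infty(M,\bar g)$.
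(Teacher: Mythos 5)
Your proposal is correct and follows essentially the same route as the paper: the two-dimensional Sobolev embedding $H^2(M,\bar g)\hookrightarrow C^0(M,\bar g)\hookrightarrow L^\infty(M,\bar g)$ applied uniformly in time to the $L^\infty_tH^2_x$ component of $U_T$. Your added remarks on applying the inequality for almost every fixed $t$ and on measurability of $t\mapsto\|f(t)\|_{L^\infty(M,\bar g)}$ merely make explicit what the paper leaves implicit.
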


\begin{proof}
By using the Sobolev embedding theorem and the fact that $\vol_{\bar g}<\infty$, we have that $H^2(M,\bar g)\hookrightarrow C^0(M,\bar g)\hookrightarrow L^\infty(M,\bar g)$. So we get $L^\infty_tH^2_x\hookrightarrow L^\infty_tL^\infty_x$. In particular we see that every function $u\in U_T$ is in $L^\infty_tL^\infty_x$.
\end{proof}

\begin{remark}\label{uniform2}
Later we will use this embedding for our reference solution $u\in U_T$. Of course we also have $C^\infty_tC^\infty_x\hookrightarrow L^\infty_tL^\infty_x$. So the same embedding is true also for the reference solution provided by Hamilton.
\end{remark}

For the last regularity result we need the following lemma (see e.g.~\cite[Corollary 1.7]{Cha04}) which is a consequence of the Trudinger--Moser inequality:

\begin{lemma}\label{Onofri-ungleichung}
For a two-dimensional, closed Riemannian manifold $(M,\bar g)$ there are constants $\eta>0$ and $C=C(\bar g)>0$ such that
\begin{equation}\label{OU}
\int_M\e^{(f-\bar f)}d\mu_{\bar g}\le C\exp\left(\eta\|\nabla_{\bar g}f\|^2_{L^2(M,\bar g)}\right)
\end{equation}
for all $f\in H^1(M,\bar g)$ where
\[\bar f:=\frac{1}{\vol(M,\bar g)}\int_Mf\:d\mu_{\bar g}=\int_Mf\:d\mu_{\bar g},\]
in view of our assumption that $\vol_{\bar g}=1$.
\end{lemma}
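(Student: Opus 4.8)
The plan is to deduce this directly from the sharp Moser--Trudinger inequality on closed surfaces, treating the latter as the black box and supplying only an elementary Young's-inequality reduction. Recall that for a two-dimensional closed manifold $(M,\bar g)$ the Moser--Trudinger inequality furnishes constants $\beta>0$ and $C_0=C_0(\bar g)>0$ with
\[\int_M\e^{\beta g^2/\|\nabla_{\bar g}g\|^2_{L^2(M,\bar g)}}\,d\mu_{\bar g}\le C_0\]
for every $g\in H^1(M,\bar g)$ satisfying $\int_M g\,d\mu_{\bar g}=0$. First I would reduce \eqref{OU} to this mean-zero form: setting $g:=f-\bar f$ we have $\int_M g\,d\mu_{\bar g}=0$ and $\nabla_{\bar g}g=\nabla_{\bar g}f$, so the left-hand side of \eqref{OU} is exactly $\int_M\e^{g}\,d\mu_{\bar g}$. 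The degenerate case $\|\nabla_{\bar g}f\|_{L^2(M,\bar g)}=0$ is immediate, since then $f\equiv\bar f$ and the integral equals $\vol_{\bar g}=1\le C$; hence I may assume $\|\nabla_{\bar g}g\|_{L^2(M,\bar g)}>0$ and divide by it freely.

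The key step is to dominate the linear exponent $g$ by the quadratic one occurring in Moser--Trudinger. Applying Young's inequality $2ab\le a^2+b^2$ pointwise with $a=\beta^{1/2}g/\|\nabla_{\bar g}g\|_{L^2(M,\bar g)}$ and $b=\|\nabla_{\bar g}g\|_{L^2(M,\bar g)}/(2\beta^{1/2})$ — which is valid for either sign of $g$, as $2ab=g$ and $(a-b)^2\ge0$ — yields
\[g\le\frac{\beta\, g^2}{\|\nabla_{\bar g}g\|^2_{L^2(M,\bar g)}}+\frac{\|\nabla_{\bar g}g\|^2_{L^2(M,\bar g)}}{4\beta}.\]
Exponentiating this pointwise and integrating over $M$, the second summand is a constant that factors out and the first is precisely the Moser--Trudinger integrand, so
\[\int_M\e^{g}\,d\mu_{\bar g}\le\exp\!\left(\frac{\|\nabla_{\bar g}g\|^2_{L^2(M,\bar g)}}{4\beta}\right)\int_M\e^{\beta g^2/\|\nabla_{\bar g}g\|^2_{L^2(M,\bar g)}}\,d\mu_{\bar g}\le C_0\exp\!\left(\frac{\|\nabla_{\bar g}f\|^2_{L^2(M,\bar g)}}{4\beta}\right).\]
This is exactly \eqref{OU} with $\eta:=1/(4\beta)$ and $C:=C_0$.

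The genuine analytic content lies entirely in the Moser--Trudinger inequality, whose proof (via rearrangement to a radial problem on a disc together with Moser's sharp one-dimensional estimate, or a covering and capacity argument adapted to the manifold) is the real obstacle; I would simply invoke it from the literature, e.g.\ \cite{Cha04}. Granting it, everything that remains is the purely algebraic Young's-inequality trick above, and the only point requiring a word of care is the separate treatment of constant $f$, which I isolated precisely to legitimise dividing by $\|\nabla_{\bar g}g\|_{L^2(M,\bar g)}$.
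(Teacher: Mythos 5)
Your proof is correct, and it follows exactly the route the paper intends: the paper gives no proof of its own for this lemma, citing only \cite[Corollary 1.7]{Cha04} with the remark that the statement is a consequence of the Trudinger--Moser inequality, and your pointwise Young's-inequality reduction $g\le \beta g^2/\|\nabla_{\bar g}g\|^2_{L^2(M,\bar g)}+\|\nabla_{\bar g}g\|^2_{L^2(M,\bar g)}/(4\beta)$, followed by exponentiation and integration, is precisely the standard derivation of that corollary from the mean-zero Moser--Trudinger (Fontana) inequality, yielding $\eta=1/(4\beta)$ and $C=C_0$. Your separate treatment of the degenerate case $\|\nabla_{\bar g}f\|_{L^2(M,\bar g)}=0$ (where connectedness of $M$ forces $f\equiv\bar f$, so the integral equals $\vol_{\bar g}=1\le C$) is the one point where division would otherwise fail, and you handled it correctly.
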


With \autoref{Onofri-ungleichung} we can show the following result.

\begin{corollary}\label{Reg_w}
For $f\in V_T$ we have 
\begin{equation}
\e^{\pm f}\in L^\infty_tL^p_x \quad\text{for all}\quad p\in[1,\infty).
\end{equation}
\end{corollary}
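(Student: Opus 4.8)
The plan is to apply the Onofri-type inequality \eqref{OU} from \autoref{Onofri-ungleichung} slicewise in time, that is, to a fixed time slice $f(t)\in H^1(M,\bar g)$, after rescaling by the exponent $p$. Fix $p\in[1,\infty)$ and a choice of sign. For almost every $t$ the function $g:=\pm p\,f(t)$ lies in $H^1(M,\bar g)$, with mean $\bar g=\pm p\,\bar f(t)$ (using $\vol_{\bar g}=1$) and gradient $\nabla_{\bar g}g=\pm p\,\nabla_{\bar g}f(t)$. Writing $\int_M\e^{\pm p f(t)}d\mu_{\bar g}=\e^{\pm p\bar f(t)}\int_M\e^{g-\bar g}d\mu_{\bar g}$ and applying \eqref{OU} to $g$ gives
\[\int_M\e^{\pm p f(t)}d\mu_{\bar g}\le C\,\e^{\pm p\bar f(t)}\exp\bigl(\eta p^2\|\nabla_{\bar g}f(t)\|^2_{L^2(M,\bar g)}\bigr).\]

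Next I would control the mean value $\bar f(t)$. Since $\vol_{\bar g}=1$, the Cauchy--Schwarz inequality yields $|\bar f(t)|\le\|f(t)\|_{L^1(M,\bar g)}\le\|f(t)\|_{L^2(M,\bar g)}$, so that $\e^{\pm p\bar f(t)}\le\e^{p\|f(t)\|_{L^2(M,\bar g)}}$. Combining this with the previous bound gives
\[\int_M\e^{\pm p f(t)}d\mu_{\bar g}\le C\exp\bigl(p\|f(t)\|_{L^2(M,\bar g)}+\eta p^2\|\nabla_{\bar g}f(t)\|^2_{L^2(M,\bar g)}\bigr).\]

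Finally, since $f\in V_T=L^\infty_tH^1_x$, both $\|f(t)\|_{L^2(M,\bar g)}$ and $\|\nabla_{\bar g}f(t)\|_{L^2(M,\bar g)}$ are bounded for almost every $t$ by $\|f\|_{L^\infty_tH^1_x}<\infty$, independently of $t$. Hence the right-hand side of the last display is dominated by a constant depending only on $p$, $\eta$, $C$ and $\|f\|_{L^\infty_tH^1_x}$, but not on $t$. Taking the essential supremum over $t\in[0,T]$ and then the $p$-th root yields $\|\e^{\pm f}\|_{L^\infty_tL^p_x}<\infty$, which is exactly the assertion.

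There is no serious obstacle here: the Trudinger--Moser/Onofri inequality \eqref{OU} does essentially all of the work, converting the uniform control of the Dirichlet energy into exponential integrability. The only two points that require a little care are splitting off the (possibly nonzero) mean $\bar f(t)$ before invoking \eqref{OU}, since \eqref{OU} is stated for the mean-free part $f-\bar f$, and then checking that the resulting estimate is uniform in $t$ so that it survives the passage to the $L^\infty_t$-norm; both follow immediately from the definition of $V_T$.
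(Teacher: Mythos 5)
Your proposal is correct and follows essentially the same argument as the paper: apply the Trudinger--Moser/Onofri inequality \eqref{OU} slicewise in time to $pf(t)$, split off the mean, control it by the $L^1$ (or $L^2$) norm using $\vol_{\bar g}=1$, and take the essential supremum using $f\in V_T$. The only cosmetic differences are that you treat both signs simultaneously while the paper handles $+f$ and then replaces $f$ by $-f$, and that you add the harmless extra step $\|f(t)\|_{L^1}\le\|f(t)\|_{L^2}$.
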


\begin{proof}
We know that $pf\in V_T$ for all $p\in[1,\infty)$ and so we get with \autoref{Onofri-ungleichung}
\begin{align*}
\|\e^f\|^p_{L^\infty_tL^p_x}&=\underset{t\in[0,T]}{\esssup}\int_M\e^{pf(t)}d\mu_{\bar g}\\
&=\underset{t\in[0,T]}{\esssup}\int_M\e^{pf(t)- p\bar f(t)+ p\bar f(t)}\\
&\le \underset{t\in[0,T]}{\esssup}\left(C\exp(\eta\|\nabla_{\bar g}(pf(t))\|^2_{L^2(M,\bar g)})\:\e^{|p\bar f(t)|}\right)\\
&\le \underset{t\in[0,T]}{\esssup}\left(C\e^{\eta p^2\|\nabla_{\bar g}f(t)\|^2_{L^2(M,\bar g)}+p\|f(t)\|_{L^1(M,\bar g)}}\right).
\end{align*}
This expression is finite since $f\in V_T$ and, using that $\vol_{\bar g}<\infty$, therefore also $f\in L^\infty_tL^1_x$. So we have $\e^f\in L^\infty_tL^p_x$ for all $p\in [1,\infty)$. Replacing $f$ by $-f$ we similarly get that $\e^{-f}\in L^\infty_tL^p_x$ for all $p\in[1,\infty)$.
\end{proof}

\begin{remark}
Since $T<\infty$ and $\vol_{\bar g}<\infty$, we also have for every $p\in[0,\infty)$ the embedding $L^\infty_tL^p_x\hookrightarrow L^q_tL^r_x$ for $q<\infty$, $r\le p$.
\end{remark}

\subsection{Allowed Test Functions}\label{TF}
Functions $\psi\in L^\infty_tH^1_x=V_T$ are allowed as test functions in \eqref{Gleichungw}.
To see this let $w=u-v$ as in \autoref{proof}. Recall that we have
\begin{multline*}
\int_0^T\int_Me^{-2w(t)}\partial_tw(t)\varphi(t) d\mu_{\bar g}dt=-\int_0^T\int_M\langle\nabla_{\bar g}w(t),\nabla_{\bar g}(\e^{-2u(t)}\varphi(t))\rangle_{\bar g}d\mu_{\bar g}dt\\
+\int_0^T\int_M(\e^{-2w(t)}-1)\partial_tu(t)\varphi(t)d\mu_{\bar g}dt+\int_0^T\int_M \bar K(1-\e^{-2w(t)})\varphi(t)d\mu_{\bar g}dt
\end{multline*}
for all test functions $\varphi\in C^\infty_c((0,T),C^\infty(M,\bar g))$.
By density of smooth functions in $L^2_tH^1_x$, for any $\psi\in V_T\subset L^2_tH^1_x$ (recall that $T<\infty$) we find a sequence $(\varphi_n)\subset C^\infty_c((0,T),C^\infty(M,\bar g))$ such that
\[\|\varphi_n-\psi\|_{L^2_tH^1_x}\to 0\quad\text{for}\quad n\to\infty.\]
Plugging in $\varphi_n$ instead of $\varphi$ and using again H\"older's inequality several times as well as Young's inequality, we see that we may pass to the limit $n\to\infty$.
So we see that $\psi\in L^\infty_tH^1_x$ is an allowed test function for the equation above.

\printbibliography

\end{document}